\documentclass{article}

\usepackage[T1]{fontenc}
\usepackage{mathtools,amsthm,amssymb,enumitem,stmaryrd,fixltx2e,microtype,url}

\setlist[enumerate]{nosep}
\setlist[enumerate,1]{label=\textup{(\alph*)}}

\newtheorem{theorem}{Theorem}[section]
\newtheorem{proposition}[theorem]{Proposition}
\newtheorem{lemma}[theorem]{Lemma}

\theoremstyle{definition}
\newtheorem{definition}[theorem]{Definition}

\theoremstyle{remark}

\newtheorem{remark}[theorem]{Remark}

\numberwithin{equation}{section}

\newcommand{\Lie}[1]{\textsl{#1}}
\newcommand{\lie}[1]{\mathfrak{#1}}
\newcommand{\GL}{\Lie{GL}}
\newcommand{\gl}{\lie{gl}}

\newcommand{\SL}{\Lie{SL}}
\newcommand{\sL}{\lie{sl}}
\newcommand{\SO}{\Lie{SO}}
\newcommand{\so}{\lie{so}}

\newcommand{\Sp}{\Lie{Sp}}
\newcommand{\sP}{\lie{sp}}
\newcommand{\SU}{\Lie{SU}}
\newcommand{\su}{\lie{su}}
\newcommand{\Un}{\Lie{U}}
\newcommand{\un}{\lie{u}}
\newcommand{\Isom}{\Lie{Isom}}
\newcommand{\Isot}{\Lie{Isot}}

\newcommand{\f}{\lie}

\newcommand{\wnabla}{\widetilde{\nabla}}

\newcommand{\e}{\epsilon}
\newcommand{\C}{\mathbb{C}}
\newcommand{\pC}{\widetilde{\mathbb{C}}}
\newcommand{\pCP}{\pC\mathrm P}
\newcommand{\R}{\mathbb{R}}
\newcommand{\bH}{\mathbb{H}}
\newcommand{\pH}{\widetilde{\bH}}
\newcommand{\pHH}{\pH\mathrm H}
\newcommand{\pHP}{\pH\mathrm P}
\newcommand{\V}{\mathcal{V}}

\newcommand{\JJ}{\mathcal J}

\newcommand{\CH}{\C\mathrm H}
\newcommand{\HH}{\bH\mathrm H}

\newcommand{\CP}{\C\mathrm P}
\newcommand{\HP}{\bH\mathrm P}

\DeclareMathOperator{\Ad}{Ad}
\DeclareMathOperator{\ad}{ad}
\DeclareMathOperator{\diag}{diag}

\DeclareMathOperator{\im}{Im}
\DeclareMathOperator{\re}{Re}
\DeclareMathOperator{\Tr}{Tr}
\DeclareMathOperator{\Span}{Span}

\newcommand{\K}{\mathcal{K}}
\newcommand{\sca}{\mathrm{s}}
\newcommand{\QK}{\mathcal{QK}}

\newcommand{\Cyclic}{\mathop{\mathchoice%
  {\vcenter{\hbox{\LARGE\( \mathfrak S \)%
    \vrule width 0pt height 1.7ex depth 0.2ex}}}% displaystyle
  {\vcenter{\hbox{\Large\( \mathfrak S \)\kern-0.1em}}}% text
  {\vcenter{\hbox{\normalsize\( \mathfrak S \)\kern -0.1em}}}% script
  {\vcenter{\hbox{\scriptsize\( \mathfrak S \)\kern -0.1em}}}}}% scriptscript

\newcommand{\SXYZ}{XYZ}

\newcommand{\SXJaYJaZ}{XJ_aYJ_aZ}

\DeclarePairedDelimiterX{\inp}[2]{\langle}{\rangle}{#1, #2}
\DeclarePairedDelimiterX{\Set}[2]{\{}{\}}{\, #1 \,\delimsize\vert\, #2 \,}
\DeclarePairedDelimiter{\rcomp}{\llbracket}{\rrbracket}
\DeclarePairedDelimiter{\real}{\lbrack}{\rbrack}

\newcommand{\eqbreak}[1][2]{\\&\hskip#1em}

\makeatletter
\newcommand{\mythanks}{\xdef\@thefnmark{}\@footnotetext}
\makeatother

\setcounter{secnumdepth}{4}

\title{Non-degenerate homogeneous $\epsilon$-K\"ahler and
$\epsilon$-quaternion K\"ahler structures of linear type}
\author{Ignacio Luj\'an  \and Andrew Swann}
\date{}

\begin{document}

\maketitle

\mythanks{Partially supported by MINECO, Spain, under grant
MTM2011-22528 and by the Danish Council for Independent Research,
Natural Sciences}

\begin{abstract}
  We study the class of non-degenerate homogeneous structures of
  linear type in the pseudo-K\"ahler, para-K\"ahler, pseudo-quaternion
  K\"ahler and para-quaternion K\"ahler cases.  We show that these
  structures characterize spaces of constant holomorphic,
  para-holomorphic, quaternion and para-quaternion sectional curvature
  respectively.  In addition the corresponding homogeneous models are
  computed, exhibiting the relation between these kind of structures
  and the incompleteness of the metric.
\end{abstract}

\mythanks{\emph{MSC2010:} Primary 53C30, Secondary 53C50,
53C55, 53C80.}
\mythanks{\emph{Key words and phrases:} para-K\"ahler,
pseudo-K\"ahler, pseudo-quaternion K\"ahler, para-quaternion
K\"ahler, reductive homogeneous pseudo-Riemannian spaces.}
\renewcommand{\thefootnote}{\arabic{footnote}}

\tableofcontents

\section{Introduction}

Ambrose and Singer \cite{AS} characterized homogeneous Riemannian
spaces by the existence of a \( (1,2) \)-tensor field \( S \) called
\emph{homogeneous structure tensor} (or simply \emph{homogeneous
structure}) satisfying a system of geometric PDE's (called
Ambrose-Singer equations).  This result was extended to homogeneous
spaces with some extra geometric structure in~\cite{Kir}, and later
the theory was adapted to reductive metrics with arbitrary signature
in~\cite{GO2}.  This approach to the study of homogeneous spaces has
proved to be one of the most useful, probably due to the combination
of its algebraic and geometric aspects.  The pointwise classification
of homogeneous structures in the purely Riemannian case was provided
in~\cite{TV}, and in~\cite{Fin}, using representation theory, such a
description was obtained for all the possible holonomy groups in
Berger's list.  These techniques have also been used for metrics with
signature (see for instance~\cite{BGO}).  In many cases (such as
Riemannian, K\"ahler, hyper-K\"ahler, quaternion K\"ahler, as well as
in the pseudo-Riemannian analogues) these classifications contain a
class consisting of sections of a bundle whose rank grows linearly
with the dimension of the manifold.  For that reason homogeneous
structures belonging to these classes are called of \emph{linear
type}.  The corresponding tensor fields are locally characterized by a
set of vector fields satisfying a set of partial differential
equations's determined by the Ambrose-Singer equations.

For definite metrics, in the Riemannian, K\"ahler and quaternion
K\"ahler cases, homogeneous structures of linear type characterize
spaces of negative constant sectional, holomorphic sectional or
quaternionic sectional curvature (see \cite{CGS,GMM,TV}).  When
metrics with signature are studied, the degeneracy of the vector
fields determining the homogeneous structure tensor needs to be taken
into account.  Firstly, the classification of symmetric spaces, that
is spaces with homogeneous tensor zero, becomes much richer (see
\cite{KO}, for example).  Non-zero degenerate homogeneous structures
of linear type, meaning ones that are given by a vector field that is
null, in the purely Riemannian case and in the pseudo-K\"ahler and
para-K\"ahler cases have proved to be related to the geometry of
certain homogeneous plane waves, while in the pseudo-quaternion
K\"ahler and para-quaternion K\"ahler cases the only manifolds
admitting these kind of structures are flat (see
\cite{Mon,CL,CL2}).  On the other hand, non-degenerate structures of
linear type, so the vector field is not null, characterize in the
purely pseudo-Riemannian case spaces of constant sectional curvature
\cite{GO2}.

In this paper we focus on non-degenerate structures of linear type in
the pseudo-K\"ahler, para-K\"ahler, pseudo-quaternion K\"ahler and
para-quaternion K\"ahler cases.  Our main results are that these are
necessarily spaces of constant holomorphic, para-holomorphic,
quaternion and para-quaternion curvature, respectively.  Additionally
we will describe the corresponding homogeneous metrics and show that
in general they are incomplete.

In Section~\ref{sec:preliminaries} the general framework is introduced
and the notation settled.  Throughout the manuscript the notions of
pseudo-K\"ahler and para-K\"ahler geometry and the notions of
pseudo-quaternion and para-quaternion K\"ahler geometry will be
unified and treated together through the definition of \( \e
\)-K\"ahler geometry and \( \e \)-quaternion K\"ahler geometry
respectively.  In Section~\ref{sec:class-homog-struct} we summarize
the pointwise algebraic classification of homogeneous \( \e
\)-K\"ahler and \( \e \)-quaternion K\"ahler structures.  In
Section~\ref{sec:non-degen-homog} we restrict ourselves to the class
of non-degenerate homogeneous \( \e \)-K\"ahler and \( \e
\)-quaternion K\"ahler structures of linear type.  More precisely we
prove that these structures characterize spaces of constant
holomorphic, para-holomorphic, quaternion and para-quaternion
sectional curvature.  In Section~\ref{sec:homog-models-compl} the
associated homogeneous models are computed, showing that they are
usually geodesically incomplete.  This fact will make impossible for
the corresponding simply-connected space forms to admit globally
defined non-degenerate homogeneous structures of linear type.

\section{Preliminaries}
\label{sec:preliminaries}

\subsection{\( \e \)-K\"ahler geometry}

We can gather the notions of pseudo-K\"ahler and para-K\"ahler
geometry in the following way.

\begin{definition}
  Let \( (M,g) \) be a pseudo-Riemannian manifold, and let \(
  \epsilon=\pm 1 \).
  \begin{enumerate}
  \item An \emph{almost \( \epsilon \)-Hermitian structure} on \( (M,g) \) is
    a smooth section \( J \) of \( \so(TM) \) such that \(
    J^2=\epsilon \).
  \item \( (M,g) \) is called \emph{\( \epsilon \)-K\"ahler} if it
    admits a parallel almost \( \epsilon \)-Hermitian structure with
    respect to the Levi-Civita connection.
  \end{enumerate}
\end{definition}

This way \( \e \) should be substituted by \( -1 \) in the
pseudo-K\"ahler case and by \( +1 \) in the para-K\"ahler case.  If \(
(M,g) \) admits an almost \( \e \)-Hermitian structure \( J \), then
\( M \) has dimension \( 2n \) and the signature of \( g \) is \(
(2r,2s) \), \( r+s=n \), for \( \e=-1 \), and \( (n,n) \) for \( \e=1
\).  In addition \( (M,g,J) \) is \( \e \)-K\"ahler if and only if the
holonomy group of \( g \) is contained in \( U(r,s) \) for \( \e=-1 \)
or \( \GL(n,\R)\subset \SO(n,n) \) for \( \e=1 \) (for a survey on
para-complex geometry see \cite{CFG}).

Hereafter \( (M,g,J) \) is supposed to be a connected \( \e
\)-K\"ahler manifold with \( \dim M\geqslant 4 \).  In addition, by
complete we shall mean geodesically complete.

\begin{definition}
  An \( \e \)-K\"ahler manifold \( (M,g,J) \) is called a
  \emph{homogeneous \( \e \)-K\"ahler manifold} if there is a
  connected Lie group \( G \) of isometries acting transitively on \(
  M \) and preserving \( J \).  The structure \( (M,g,J) \) is called
  a \emph{reductive} homogeneous \( \e \)-K\"ahler manifold if the Lie
  algebra \( \f{g} \) of \( G \) can be decomposed as \(
  \f{g}=\f{h}\oplus\f{m} \) with
  \begin{equation*}
    [\f{h},\f{h}]\subset \f{h},\qquad
    [\f{h},\f{m}]\subset \f{m}.
  \end{equation*}
\end{definition}

Using Kiri\v{c}enko's Theorem \cite{Kir} (see also \cite{GO1}) we have

\begin{theorem}
  Let \( (M,g,J) \) be a connected, simply-connected and complete \(
  \e \)-K\"ahler manifold.  Then the following are equivalent:
  \begin{enumerate}
  \item \( (M,g,J) \) is a reductive homogeneous \( \e \)-K\"ahler
    manifold.
  \item \( (M,g,J) \) admits a linear connection \( \wnabla \) such
    that
    \begin{equation}\label{AS+J}
      \wnabla g=0, \quad  \wnabla R=0,\quad  \wnabla
      S=0,\quad  \wnabla J=0,
    \end{equation}
    where \( S=\nabla-\wnabla \), \( \nabla \) is the Levi-Civita
    connection of \( g \), and \( R \) is the curvature tensor of \( g
    \).
  \end{enumerate}
\end{theorem}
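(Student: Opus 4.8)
The plan is to deduce this as a direct application of Kiri\v{c}enko's extension of the Ambrose--Singer theorem, taking the invariant geometric structure to be the almost \( \e \)-Hermitian structure \( J \). Recall that the pseudo-Riemannian Ambrose--Singer theorem in the reductive setting with arbitrary signature (as in \cite{GO2,GO1}) asserts that a connected, simply-connected and complete pseudo-Riemannian manifold \( (M,g) \) is a reductive homogeneous space precisely when it carries a metric connection \( \wnabla \) whose torsion and curvature are \( \wnabla \)-parallel; equivalently, writing \( S=\nabla-\wnabla \), one has the first three equations \( \wnabla g=0 \), \( \wnabla R=0 \) and \( \wnabla S=0 \). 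Kiri\v{c}enko's refinement adds that a prescribed tensor field \( P \) is invariant under the resulting transitive isometry group if and only if \( \wnabla P=0 \). The whole statement then follows once this is applied with \( P=J \), so the bulk of the work is to identify the two directions with the corresponding halves of the cited theorems.

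For the implication (a) \( \Rightarrow \) (b), I would take \( \wnabla \) to be the canonical connection associated to a reductive decomposition \( \f{g}=\f{h}\oplus\f{m} \) of the transitive isometry group \( G \). By construction every \( G \)-invariant tensor is \( \wnabla \)-parallel: the parallelism of \( g \) and \( R \) is automatic because \( G \) acts by isometries, \( S=\nabla-\wnabla \) is \( \wnabla \)-parallel by the standard computation for the canonical connection, and \( \wnabla J=0 \) holds precisely because \( G \) preserves \( J \) by hypothesis. This yields all four equations of~\eqref{AS+J}.

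For (b) \( \Rightarrow \) (a), the first three equations place us in the hypotheses of the pseudo-Riemannian Ambrose--Singer theorem, which reconstructs a transitive Lie group \( G \) of isometries together with a reductive decomposition, the \( \f{m} \)-part modelled on the tangent space at a base point and the \( \f{h} \)-part on the holonomy algebra of \( \wnabla \). The completeness and simple-connectedness assumptions are exactly what is needed to integrate this infinitesimal data into a globally transitive action. The transvections and holonomy transformations generating \( G \) are all built from \( \wnabla \)-parallel transport along broken \( \wnabla \)-geodesics, so the extra hypothesis \( \wnabla J=0 \) forces each such factor to commute with \( J \); hence \( G \) preserves \( J \), and since \( J \) is already \( \nabla \)-parallel with \( J^2=\e \) and \( J\in\so(TM) \), the triple \( (M,g,J) \) is a reductive homogeneous \( \e \)-K\"ahler manifold.

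The only delicate point, and the one I expect to be the main obstacle, is this last step of the reconstruction: one must verify that \( J \), which a priori is only a pointwise tensor preserved by \( \wnabla \)-parallel transport, is genuinely invariant under the reconstructed isometries, not merely along individual \( \wnabla \)-geodesics. This is exactly what Kiri\v{c}enko's theorem encodes, and it holds because the reconstructed group is generated by such parallel transport, each generator of which preserves \( J \) by \( \wnabla J=0 \); the algebraic compatibilities \( J^2=\e \) and \( J\in\so(TM) \) are transported along with \( J \) and so impose no further condition. Consequently no analytic input beyond the cited theorems is needed, and the equivalence follows.
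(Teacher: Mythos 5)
Your proposal is correct and follows essentially the same route as the paper, which proves this statement simply by invoking Kiri\v{c}enko's theorem \cite{Kir} together with its pseudo-Riemannian reductive formulation \cite{GO1,GO2}; your write-up merely expands that citation into the standard two-direction argument (canonical connection of the reductive decomposition for one implication, Ambrose--Singer reconstruction with the extra parallel tensor \( J \) for the other). No gap: the delicate point you flag, invariance of \( J \) under the reconstructed group, is exactly the content of Kiri\v{c}enko's refinement, as you say.
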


\begin{definition}
  A tensor field \( S \) of type \( (1,2) \) satisfying \eqref{AS+J}
  is called a \emph{homogeneous \( \e \)-K\"ahler structure}.
\end{definition}

\subsection{\( \e \)-quaternion K\"ahler geometry}

A pseudo-Riemannian manifold \( (M,g) \) of signature \( (r,s) \) is
called \emph{strongly-oriented} if the bundle of orthonormal frames
reduces to the connected component \( \SO_0(r,s)\subset \SO(r,s) \).
Since \( \SO(r,s)/\SO_0(r,s) \) is discrete there always exists a
strongly oriented cover of \( M \).

Let \( \e=(\e_1,\e_2,\e_3) \) where \( \e_1=-1 \), \(
\e_2=\e_3=\pm 1 \), we can combine the notions of
pseudo-quaternion K\"ahler and para-quaternion K\"ahler geometry
as follows.

\begin{definition}
  \begin{enumerate}
  \item Let \( (M,g) \) be a pseudo-Riemannian manifold.  An \emph{\( \e
    \)-quaternion Hermitian structure} is a subbundle \( Q\subset
    \so(TM) \) with a local basis \( J_a \), \( a=1,2,3 \) satisfying
    \begin{equation*}
      J_a^2=\e_a,\qquad  J_1J_2=J_3.
    \end{equation*}

  \item A pseudo-Riemannian manifold \( (M,g) \) is called \emph{\( \e
    \)-quaternion K\"ahler} if it is strongly-oriented and it admits a
    parallel \( \e \)-quaternion Hermitian structure with respect to
    the Levi-Civita connection.
  \end{enumerate}
\end{definition}

This way \( \e \) should be substituted by \( (-1,-1,-1) \) in the
pseudo-quaternion K\"ahler case and by \( (-1,1,1) \) in the
para-quaternion K\"ahler case.  If \( (M,g) \) admits an \( \e
\)-quaternion Hermitian structure then \( M \) has dimension \( 4n \)
and \( g \) has signature \( (4r,4s) \) for \( \e=(-1,-1,-1) \) and \(
(2n,2n) \) for \( \e=(-1,1,1) \).  In addition \( (M,g) \) is \( \e
\)-quaternion K\"ahler if and only if the holonomy group of the
Levi-Civita connection is contained in \( \Sp(r,s)\Sp(1) \) for \(
\e=(-1,-1,-1) \) and \( \Sp(n,\R)\Sp(1,\R) \) for \( \e=(-1,1,1)
\).  Here \( \Sp(r,s)\Sp(1) \) is seen as a subgroup of \( \SO_0(4r,4s)
\) via the representation \( \left(\C^2\otimes\C^{2n}\right)^{\rho}
\cong \bH^{n} \), where \( \rho \) is the real structure obtained by
multiplication of the quaternionic structures of \( \C^2 \) and \(
\C^{2n} \) respectively, with the quaternionic Hermitian product \(
\inp q{q'} = -\sum_{i=1}^rq_i\overline{q'}_i +
\sum_{j=r+1}^nq_j\overline{q'}_j \).  On the other hand \(
\Sp(n,\R)\Sp(1,\R) \) is seen as a subgroup of \( \SO_0(2n,2n) \) via
the representation \( V=\R^{2}\otimes \R^{2n}\cong \pH^n \), with the
para-quaternion Hermitian product \( \inp q{q'} =
\sum_{i=1}^nq_i\overline{q'}_i \).  We will denote by \( Sp^{\e}(n) \)
the group \( \Sp(r,s) \), \( r+s=n \), when \( \e=(-1,-1,-1) \) and \(
\Sp(n,\R) \) when \( \e=(-1,1,1) \).  Their Lie algebras are denoted by
\( \sP^{\e}(n) \).  For the proof of the following Proposition see
\cite{AC}.

\begin{proposition}
  An \( \e \)-quaternion K\"ahler manifold is Einstein and has
  curvature tensor
  \begin{equation*}
    R=\nu_q R^0+R^{\sP^{\e}(n)},
  \end{equation*}
  where \( \nu_q=\sca/(16n(n+2)) \) is one quarter of the reduced
  scalar curvature, \( R^0 \) is four times the curvature of the \( \e
  \)-quaternionic hyperbolic space (of the corresponding signature)
  \begin{equation}\label{R_0}
    \begin{split}
      R^0_{XYZW} & = g(X,Z)g(Y,W)-g(Y,Z)g(X,W) \eqbreak
      -\sum_{a}\e_a\bigl\{g(J_aX,Z)g(J_aY,W) - g(J_aY,Z)g(J_aX,W)
      \eqbreak[8] +2g(X,J_aY)g(Z,J_aW)\bigr\},
    \end{split}
  \end{equation}
  and \( R^{\sP^{\e}(n)} \) is an algebraic curvature tensor of type
  \( \sP^{\e}(n) \), that is \( R^{\sP^{\e}(n)} \) commutes with \(
  J_a \) for \( a=1,2,3 \).
\end{proposition}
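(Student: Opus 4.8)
The plan is to follow the Berger--Alekseevsky strategy: reduce the statement to a pointwise, representation-theoretic decomposition of curvature tensors compatible with the holonomy reduction, and then use the contracted Bianchi identities to obtain the Einstein condition and the value of \( \nu_q \). First I would invoke the Ambrose--Singer holonomy theorem. Since \( (M,g) \) is \( \e \)-quaternion K\"ahler, its restricted holonomy algebra is contained in \( \sP^{\e}(n)\oplus\sP(1) \), so at each point the curvature endomorphism \( R(X,Y) \) takes values in this subalgebra of \( \so(TM) \). Expressing the \( \sP(1) \)-summand through the local basis \( J_a \) gives a pointwise splitting
\begin{equation*}
  R(X,Y)=R^{\sP^{\e}(n)}(X,Y)+\sum_a\rho_a(X,Y)\,J_a,
\end{equation*}
where the first term commutes with every \( J_a \) and the \( \rho_a \) are \( 2 \)-forms recording the \( \sP(1) \)-component.

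The heart of the argument is the algebraic claim that the first Bianchi identity forces the \( \sP(1) \)-component to be a scalar multiple of the \( \sP(1) \)-part of \( R^0 \). Decomposing the space of Bianchi-symmetric curvature tensors valued in \( \sP^{\e}(n)\oplus\sP(1) \) under \( \Sp^{\e}(n)\Sp(1) \), the contribution of the summand \( \sP(1)\cong\Sym^2 H \), with \( H \) the standard two-dimensional \( \sP(1) \)-module, sits inside \( \Sym^2(\Sym^2 H)=\Sym^4 H\oplus\R \). I would show that Bianchi annihilates the \( \Sym^4 H \) factor, leaving only the one-dimensional invariant line, which is exactly \( \R\,R^0 \) with \( R^0 \) as in \eqref{R_0}. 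This is where I expect the main difficulty: in the pseudo- and para-cases one works with the noncompact real forms \( \Sp(r,s) \), \( \Sp(n,\R) \) and \( \Sp(1,\R) \), and one must check that the dimension of the invariant space does not jump. Complexifying both factors reduces the whole count to the \( \Sp(n,\C)\Sp(1,\C) \) computation, so the single invariant line persists, and one obtains pointwise \( R=\nu_q R^0+R^{\sP^{\e}(n)} \) with \( \nu_q \) a priori a function and \( R^{\sP^{\e}(n)} \) commuting with the \( J_a \).

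It remains to extract the Einstein property and the value of \( \nu_q \). A curvature tensor of type \( \sP^{\e}(n) \) is Ricci-traceless, being of hyper-K\"ahler type, while a direct contraction of \eqref{R_0} shows that \( R^0 \) has Ricci tensor a fixed nonzero multiple of \( g \); hence \( \mathrm{Ric}=c\,\nu_q\,g \) pointwise, and a second contraction identifies the factor, giving \( \nu_q=\sca/(16n(n+2)) \). Finally, to pass from ``Einstein at each point'' to genuinely Einstein I would feed the decomposition into the second Bianchi identity: since the bundle \( Q \) is parallel the invariant tensor \( R^0 \) satisfies \( \nabla R^0=0 \), so differentiating \( R=\nu_q R^0+R^{\sP^{\e}(n)} \) and contracting, together with the holonomy constraint on \( \nabla R^{\sP^{\e}(n)} \), forces \( d\nu_q=0 \) for \( n\geqslant 2 \). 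Thus \( \nu_q \), and with it the scalar curvature, is constant and the manifold is Einstein.
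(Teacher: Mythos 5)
First, for comparison: the paper itself gives no proof of this proposition --- it is quoted from Alekseevsky and Cort\'es \cite{AC}. So the benchmark is the standard Alekseevsky--Salamon-type argument, and your outline (curvature valued in the holonomy algebra via Ambrose--Singer, first-Bianchi rigidity of the \( \sP^{\e}(1) \)-part, Ricci traces, Schur's lemma for constancy) does reproduce that outline, and your complexification trick for treating the real forms \( \Sp(r,s) \), \( \Sp(n,\R) \), \( \Sp(1,\R) \) uniformly is sound.

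The genuine gap is in the step you yourself call the heart of the argument. Writing the pair-symmetric curvature tensor as an element of \( \Sym^2(\sP^{\e}(n)\oplus\sP^{\e}(1)) \), there are three summands: \( \Sym^2(\sP^{\e}(n)) \), the mixed module \( \sP^{\e}(n)\otimes\sP^{\e}(1)\cong S^2E\otimes S^2H \), and \( \Sym^2(\sP^{\e}(1))\cong \Sym^4H\oplus\R \). You analyse only the last one and never mention the mixed summand. But the mixed summand is exactly the crux: its Ricci contraction is the trace-free part of the Ricci tensor, so the Einstein conclusion is equivalent to showing that no Bianchi-compatible curvature tensor has a component there, and this is precisely where \( n\geqslant 2 \) is indispensable. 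Indeed, for \( n=1 \) one has \( \sP(1)\oplus\sP(1)=\so(4) \) (in the definite case), the mixed block is the trace-free Ricci of an arbitrary oriented Riemannian \( 4 \)-manifold, it survives the Bianchi identity, and the proposition is false. You invoke \( n\geqslant 2 \) only in the final Schur step, where dimension \( \geqslant 3 \) suffices; as written, your argument would apply verbatim to \( n=1 \) and prove a false statement.

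Moreover, the claim you do make about \( \Sym^2(\sP^{\e}(1)) \) is incorrect as stated. For \( n\geqslant 2 \) the Bianchi map is injective on this summand: the \( 4 \)-forms \( \omega_a\wedge\omega_b \) are linearly independent, and in particular the invariant line spanned by \( \sum_a\e_a\,\omega_a\otimes\omega_a \) is sent to a nonzero multiple of the canonical \( 4 \)-form \( \Omega \). So nothing in \( \Sym^2(\sP^{\e}(1)) \) satisfies Bianchi on its own, and \( R^0 \) does not lie in that summand: being \( \Sp^{\e}(n)\Sp^{\e}(1) \)-invariant it has, besides its \( \sP^{\e}(1) \)-part, a compensating component in \( \Sym^2(\sP^{\e}(n)) \) whose Bianchi image cancels that of the \( \sP^{\e}(1) \)-part. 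The correct statement concerns the projections of the full kernel \( \ker b\cap\Sym^2(\sP^{\e}(n)\oplus\sP^{\e}(1)) = [S^4E]\oplus\R R^0 \) onto the three summands; the required cancellations occur \emph{across} summands, so the analysis cannot be carried out summand by summand as you propose. Repairing the argument means proving this full kernel computation (this is what \cite{AC} does, essentially), after which your Ricci-trace and Schur steps go through.
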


Let \( \{J_a\}_{a=1,2,3} \) be a local basis of \( Q \).  If we define
the associated \( 2 \)-forms \( \omega_a=g(\cdot,J_a\cdot) \), \(
a=1,2,3 \), then it is easy to check that the \( 4 \)-form
\begin{equation*}
  \Omega=\sum_a-\e_a\omega_a\wedge\omega_a
\end{equation*}
is globally defined.  This form is called the \emph{canonical \(
4 \)-form} of \( (M,g,Q) \).

\begin{definition}
  An \( \e \)-quaternion K\"ahler manifold \( (M,g,Q) \) is called a
  \emph{homogeneous \( \e \)-quaternion K\"ahler manifold} if there is
  a connected Lie group \( G \) of isometries acting transitively on
  \( M \) and preserving \( Q \).  The structure \( (M,g,Q) \) is
  called a \emph{reductive} homogeneous \( \e \)-quaternion K\"ahler
  manifold if the Lie algebra \( \f{g} \) of \( G \) can be decomposed
  as \( \f{g}=\f{h}\oplus\f{m} \) with
  \begin{equation*}
    [\f{h},\f{h}]\subset \f{h}, \qquad
    [\f{h},\f{m}]\subset \f{m}.
  \end{equation*}
\end{definition}

Using Kiri\v{c}enko's Theorem  \cite{Kir} we have

\begin{theorem}
  Let \( (M,g,Q) \) be a connected, simply-connected and complete \(
  \e \)-quaternion K\"ahler manifold.  Then the following are
  equivalent:
  \begin{enumerate}
  \item \( (M,g,Q) \) is a reductive homogeneous \( \e \)-quaternion
    K\"ahler manifold. \item \( (M,g,Q) \) admits a linear connection
    \( \wnabla \) such that
    \begin{equation}\label{AS+Omega}
      \wnabla g=0, \quad  \wnabla R=0,\quad  \wnabla
      S=0,\quad  \wnabla \Omega=0,
    \end{equation}
    where \( S=\nabla-\wnabla \), \( \nabla \) is the Levi-Civita
    connection of \( g \), and \( R \) is the curvature tensor of \( g
    \).
  \end{enumerate}
\end{theorem}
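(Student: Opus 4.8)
The plan is to obtain both implications from Kiri\v{c}enko's theorem \cite{Kir} together with its pseudo-Riemannian version \cite{GO2}, taking the globally defined canonical \( 4 \)-form \( \Omega \) as the distinguished geometric tensor. The underlying principle is that, on a connected, simply-connected and complete pseudo-Riemannian manifold, the existence of a metric connection \( \wnabla \) with \( \wnabla R=0 \) and \( \wnabla S=0 \) (where \( S=\nabla-\wnabla \)) is equivalent to reductive homogeneity, the group \( G \) being reconstructed as the transvection group of \( \wnabla \); imposing in addition \( \wnabla P=0 \) for a geometric tensor field \( P \) promotes this to homogeneity under a group preserving \( P \). The substance of the proof is therefore to run this machine with \( P=\Omega \) and to translate the statement ``\( G \) preserves \( \Omega \)'' into ``\( G \) preserves \( Q \)''.

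For (a) \( \Rightarrow \) (b) I would let \( \wnabla \) be the canonical connection associated with the reductive decomposition \( \f{g}=\f{h}\oplus\f{m} \). By the Nomizu construction this connection is metric and satisfies \( \wnabla R=0 \) and \( \wnabla S=0 \), and every \( G \)-invariant tensor is \( \wnabla \)-parallel. Since \( G \) preserves the bundle \( Q \) it sends a local admissible frame \( \{J_a\} \) to another, acting on the fibres of \( Q \) by the structure group and rotating the \( \omega_a \) accordingly; hence it fixes the combination \( \Omega=\sum_a-\e_a\omega_a\wedge\omega_a \), which is precisely the invariant making \( \Omega \) frame-independent. Being \( G \)-invariant, \( \Omega \) satisfies \( \wnabla\Omega=0 \), giving \eqref{AS+Omega}.

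For (b) \( \Rightarrow \) (a) the first three equations of \eqref{AS+Omega} are exactly the hypotheses of the pseudo-Riemannian Ambrose--Singer--Kiri\v{c}enko theorem \cite{GO2}, which yields a connected Lie group \( G \) acting transitively by isometries with reductive isotropy, realized as the transvection group of \( \wnabla \). The remaining equation \( \wnabla\Omega=0 \) forces \( \Omega \) to be invariant under the holonomy of \( \wnabla \), hence under \( G \). It then remains to upgrade invariance of \( \Omega \) to invariance of \( Q \).

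This last point is where I expect the real work to lie: \( Q \) is only locally framed by the \( J_a \), whereas \( \Omega \) is the natural global invariant, so the crux is the pointwise algebraic fact that the stabilizer of \( \Omega \) in \( \SO_0(4r,4s) \) (respectively \( \SO_0(2n,2n) \)) is exactly the structure group \( \Sp(r,s)\Sp(1) \) (respectively \( \Sp(n,\R)\Sp(1,\R) \)) that preserves \( Q \). For \( n\geqslant 2 \) this is the classical identification of an \( \e \)-quaternion K\"ahler structure with its fundamental \( 4 \)-form, here adapted to indefinite signature and to the para case (the dimension~\( 8 \), \( n=2 \), case deserving separate attention); cf.\ \cite{AC}. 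Granting it, a connected isometry group fixes \( \Omega \) if and only if it preserves \( Q \), and since connectedness of \( G \) also secures the strong orientation, this closes the equivalence.
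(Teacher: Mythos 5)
Your proposal is correct and takes essentially the same route as the paper: the paper gives no separate proof of this theorem, stating it as a direct application of Kiri\v{c}enko's theorem \cite{Kir} (in its pseudo-Riemannian form, cf.\ \cite{GO2}) with the canonical \( 4 \)-form \( \Omega \) playing the role of the invariant tensor. Your write-up simply makes explicit what that citation delegates --- the canonical connection argument for (a)\( \Rightarrow \)(b), the transvection-group argument for (b)\( \Rightarrow \)(a), and the pointwise identification of the stabilizer of \( \Omega \) with \( \Sp(r,s)\Sp(1) \), respectively \( \Sp(n,\R)\Sp(1,\R) \), which lets one pass between invariance of \( \Omega \) and invariance of \( Q \).
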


\begin{definition}
  A tensor field \( S \) of type \( (1,2) \) satisfying
  \eqref{AS+Omega} is called a \emph{homogeneous \( \e \)-quaternion
  K\"ahler structure}.
\end{definition}

\subsection{Constant sectional curvature}

\begin{definition}
  \begin{enumerate}
  \item An \( \e \)-K\"ahler manifold \( (M,g,J) \) is said to be of
    \emph{constant \( \e \)-holomorphic sectional curvature} \( c \)
    if
    \begin{equation*}
      \begin{split}
        R_{XYZW} = \frac{c}{4}\Bigl\{&g(Y,Z)g(X,W)-g(X,Z)g(Y,W)+\e
        g(X,JZ)g(Y,JW) \eqbreak -\e g(X,JW)g(Y,JZ)+2\e
        g(X,JY)g(Z,JW)\Bigr\}.
      \end{split}
    \end{equation*}
  \item An \( \e \)-quaternion K\"ahler manifold \( (M,g,Q) \) is said
    to be of \emph{constant \( \e \)-quaternion sectional curvature}
    \( c \) if and only if \( R=\frac{c}{4}R^0 \), where \( R^0 \) is
    given by \eqref{R_0}.
  \end{enumerate}
\end{definition}

It is straightforward to adapt the arguments from the well-known case
of definite metrics to prove that two spaces of constant and equal \(
\e \)-holomorphic sectional curvature are locally isometric preserving
their \( \e \)-K\"ahler structures and to prove the corresponding
statement for \( \e \)-quaternion K\"ahler structures.  For a detailed
study of spaces of constant \( \e \)-holomorphic and \( \e
\)-quaternion sectional curvature see for instance \cite{BR,GM,PS,V}.

\begin{proposition}\label{prop:hol-c}
  \begin{enumerate}
  \item Let \( (M,g,J) \) be a \( 2n \)-dimensional connected,
    simply-connected and complete space of constant \( \e
    \)-holomorphic sectional curvature \( c\neq 0 \).  Then, up to
    homothety, it is \( \e \)-holomorphically isometric to \( \CP^n_s
    \) if \( \e=-1 \) and \( c>0 \), \( \CH^n_s \) if \( \e=-1 \) and
    \( c<0 \), or \( \pCP^n \) if \( \e=1 \), where
    \begin{equation}
      \label{complex proj hyperb}
      \CP^n_s = \frac{\SU(n+1-s,s)}{\Lie S(\Un(n-s,s)\times
      \Un(1))},\quad
      \CH^n_s=\frac{\SU(n-s,s+1)}{\Lie S(\Un(n-s,s)\times \Un(1))},
    \end{equation}
    and
    \begin{equation}
      \label{para-complex proj}
      \pCP^n=\frac{\SL(n+1,\R)}{\Lie S(\GL(n,\R)\times \GL(1,\R))}.
    \end{equation}
  \item Let \( (M,g,Q) \) be a \( 4n \)-dimensional connected,
    simply-connected and complete space of constant \( \e
    \)-quaternion sectional curvature \( c\neq 0 \).  Then, up to
    homothety, there is an isometry of \( M \) preserving \( Q \) to
    \( \HP^n_s \) if \( \e=(-1,-1,-1) \) and \( c>0 \), \( \HH^n_s \)
    if \( \e=(-1,-1,-1) \) and \( c<0 \), or \( \pH P^n \) if \(
    \e=(-1,1,1) \), where
    \begin{equation}
      \label{symmet pseudo-quaternion proj hyperb}
      \HP_s^{n}=\frac{\Sp(s,n+1-s)}{\Sp(s,n-s)\Sp(1)},\quad
      \HH_s^{n}=\frac{\Sp(s+1,n-s)}{\Sp(s,n-s)\Sp(1)},
    \end{equation}
    and
    \begin{equation}
      \label{symmet para-quat Kahler}
      \pHP^n=\frac{\Sp(n+1,\R)}{\Sp(n,\R)\Sp(1,\R)}.
    \end{equation}
  \end{enumerate}
\end{proposition}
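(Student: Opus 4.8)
The plan is to combine the local rigidity already quoted just above the statement with a globalization principle for complete, simply-connected, locally symmetric pseudo-Riemannian spaces, and then to exhibit the listed cosets as models realizing each admissible sign of \( c \) and each signature.

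First I would observe that in both (a) and (b) the hypotheses force \( (M,g) \) to be locally symmetric. In case (a) the curvature tensor is, by the very definition of constant \( \e \)-holomorphic sectional curvature, an algebraic expression in \( g \) and \( J \) alone; since \( \nabla g=0 \) and \( \nabla J=0 \) this yields \( \nabla R=0 \). In case (b) the same conclusion holds because \( R=\tfrac{c}{4}R^0 \) with \( R^0 \) given by \eqref{R_0}, an algebraic expression in \( g \) and the locally parallel structures \( J_a \); hence \( \nabla R=0 \) again. Thus in either case \( (M,g) \) is a connected, simply-connected, complete pseudo-Riemannian locally symmetric space.

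Next I would invoke the Cartan--Ambrose--Hicks globalization valid in arbitrary signature: for complete, simply-connected pseudo-Riemannian manifolds a local isometry respecting the parallel curvature data extends to a global isometry. The local statement recalled before the proposition supplies, for any two spaces of equal constant \( \e \)-holomorphic (resp. \( \e \)-quaternion) sectional curvature, a local isometry preserving \( J \) (resp. \( Q \)); globalization then promotes it to a global one. It therefore remains to pin down one model for each value of the discrete data. Here each coset in \eqref{complex proj hyperb}--\eqref{para-complex proj} and \eqref{symmet pseudo-quaternion proj hyperb}--\eqref{symmet para-quat Kahler} is a simply-connected complete pseudo-Riemannian symmetric space; a direct curvature computation, carried out in \cite{BR,GM,PS,V}, shows that \( \CP^n_s \) and \( \CH^n_s \) (resp. \( \HP^n_s \) and \( \HH^n_s \)) have constant positive and negative \( \e \)-holomorphic (resp. \( \e \)-quaternion) sectional curvature, with the parameter \( s \) matching the signature \( (2r,2s) \) (resp. \( (4r,4s) \)) of \( g \), while \( \pCP^n \) and \( \pHP^n \) cover the para cases. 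In the para cases a change of sign of \( c \) is absorbed by an automorphism of the para-structure, so a single model suffices; in the pseudo cases the sign of \( c \) selects the compact- or noncompact-type dual. After rescaling \( g \) to normalise \( \abs{c} \), the uniqueness just established forces \( (M,g,J) \) (resp. \( (M,g,Q) \)) to be \( \e \)-holomorphically (resp. \( \e \)-quaternion) isometric to exactly one listed model.

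I expect the delicate point to be the globalization step in indefinite signature, where Hopf--Rinow fails and geodesic completeness no longer controls the extension of local isometries through metric balls; the argument must instead propagate the isometry along broken geodesics using completeness and lift it across the fundamental group using simple-connectivity, all the while checking that parallel transport of \( J \) (resp. of the bundle \( Q \), equivalently of the canonical \( 4 \)-form \( \Omega \)) is respected, so that the resulting global isometry genuinely intertwines the \( \e \)-(quaternion) K\"ahler structures. A secondary, purely Lie-theoretic, point is to verify that the full isometry group together with the isotropy subgroup of each model is exactly the quotient written in \eqref{complex proj hyperb}--\eqref{symmet para-quat Kahler}, which fixes the precise homogeneous presentation rather than merely the local geometry.
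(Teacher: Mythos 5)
The paper does not actually prove this proposition: it is quoted as a known classification, with the local rigidity statement (two spaces of equal constant $\e$-holomorphic, resp.\ $\e$-quaternion, sectional curvature are locally isometric preserving the structures) recorded in the paragraph just before it, and the global statement referred to the literature \cite{BR,GM,PS,V}. Your sketch reconstructs exactly that implicit argument --- local symmetry, Cartan--Ambrose--Hicks globalization under completeness and simple-connectedness, and identification of the models --- so in outline it is the same route and it is sound.

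One step, however, is justified by a false premise. In case (b) you write that $R^0$ is ``an algebraic expression in $g$ and the locally parallel structures $J_a$.'' The $J_a$ are \emph{not} parallel on an $\e$-quaternion K\"ahler manifold: only the bundle $Q$ is, and $\nabla_X J_a=\sum_b b_{ab}(X)J_b$ with $(b_{ab})\in\sP^{\e}(1)$; indeed, if the $J_a$ were parallel the manifold would be Ricci-flat, contradicting $R=\frac{c}{4}R^0$ with $c\neq 0$. The correct reason that $\nabla R^0=0$ (and hence $\nabla R=0$) is that $R^0$ is $Sp^{\e}(n)Sp^{\e}(1)$-invariant, i.e.\ unchanged under rotating the local frame $\{J_a\}$ by the $\sP^{\e}(1)$-valued connection coefficients --- precisely the observation the paper itself uses in the proof of Theorem~\ref{Teorema principal quat}. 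With that one-line repair, and granting the model-space facts (completeness, simple-connectedness, and the curvature computations for the cosets in \eqref{complex proj hyperb}--\eqref{symmet para-quat Kahler}) to the cited references exactly as the paper does, your argument goes through.
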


\begin{remark}\label{remark diffeo CP-CH and HP-HH}
  Note that there is a diffeomorphism between \( \CH_s^{n}(c) \) and \(
  \CP_{n-s}^{n}(-c) \) (for \( c<0 \)) which is an isometry up to a change
  of sign.  Therefore the cases \( c>0 \) and \( c<0 \) are equivalent
  for our purposes, and we can restrict ourselves to one of them.
  The same is true for \( \HH_s^n(c) \) and \( \HP_{n-s}^n(-c) \).
\end{remark}

\section{Classification of homogeneous structures}
\label{sec:class-homog-struct}

\subsection{Homogeneous \( \epsilon \)-K\"ahler structures}

We take the tensor field of type \( (0,3) \) \( S_{XYZ}=g(S_XY,Z)
\).  Hence, equations \( \nabla g=0=\wnabla g \) and \( \nabla J = 0 =
\wnabla J \) imply that \( S\cdot g = 0 \) and \( S\cdot J = 0 \), or
equivalently
\begin{equation*}
  S_{XYZ} = -S_{XZY},\qquad S_{XJYJZ} = -\e S_{XYZ}.
\end{equation*}
For a fixed point \( p\in M \) we denote \( (V,\inp \cdot\cdot) =
(T_pM,g_p) \) and take the space of \( (0,3) \) tensors with the same
symmetries as a homogeneous \( \e \)-K\"ahler structure at~\( p \)
\begin{equation*}
  \K^{\e}(V) \coloneqq \Set{S\in \otimes^3 V^*}{S_{XYZ}=-S_{XZY},\
  S_{XJYJZ}=-\e S_{XYZ}}.
\end{equation*}
In the pseudo-K\"ahler case (\( \e=-1 \)), representation of \(
\Un(p,q) \) gives \cite{BGO} the following decomposition into
irreducible modules:
\begin{equation*}
  \begin{split}
    \K^{-1}(V)
    &= \rcomp{\Lambda^{1,0}} \otimes \real{\Lambda^{1,1}}\\
    &= \rcomp{S^{2,1}_0} \oplus \rcomp{\Lambda^{1,0}} \oplus
    \rcomp{\Lambda^{2,1}_0} \oplus \rcomp{\Lambda^{1,0}}\\
    &= \K^{-1}_1 \oplus \K^{-1}_2 \oplus \K^{-1}_3 \oplus \K^{-1}_4,
  \end{split}
\end{equation*}
where \( V^*\otimes \C = \Lambda^{1,0}\oplus \Lambda^{0,1} \) and \(
\rcomp{\Lambda^{1,0}} = V^* \).  It is easy to prove that if a
homogeneous pseudo-K\"ahler structure belongs to one of the previous
submodules or their direct sum at some point of~\( M \), then the same
is true at every point of~\( M \).  Among these bundles only \(
\K^{-1}_2 \) and \( \K^{-1}_4 \) have rank growing linearly with the
dimension of~\( M \).

On the other hand, in the para-K\"ahler case (\( \e=1 \)), each of the
four real forms splits in to two representations, so \( \K^{1}(V) \)
decomposes into eight irreducible \( GL(n,\R) \)-submodules \(
\K^{1}_1,\dots,\K^{1}_8 \), see~\cite{GO1}.  In this case the
irreducible submodules with dimension growing linearly with the
dimension of \( M \) are labelled \( \K^{1}_2 \), \( \K^{1}_4 \), \(
\K^{1}_6 \), \( \K^{1}_8 \).  We have \( \K^{1}_2\oplus\K^{1}_4 \cong
T^*M \cong \K^{1}_6\oplus\K^{1}_8 \) which has rank \( 2n \) and the
splittings corresponding to the \( \pm 1 \)-eigenspaces of the
para-complex structure.  These eigenspaces have rank~\( n \).

\begin{definition}
  A homogeneous \( \e \)-K\"ahler structure \( S \) is called of
  \emph{linear type} if it belongs pointwise to the submodule
  \begin{enumerate}
  \item \( \K^{-1}_2\oplus\K^{-1}_4 \) for \( \e=-1 \).
  \item \( \K^{1}_2\oplus\K^{1}_4\oplus\K^{1}_6\oplus\K^{1}_8 \) for
    \( \e=1 \).
  \end{enumerate}
\end{definition}

\begin{proposition}[see \cite{GO1}]
  A homogeneous \( \e \)-K\"ahler structure \( S \) is of linear type
  if and only if
  \begin{equation}
    \label{e-Kahler structure}
    \begin{split}
      S_{X}Y &= g(X,Y)\xi - g(\xi,Y)X + \e g(X,JY)J\xi - \e
      g(\xi,JY)JX \eqbreak - 2g(\zeta,JX)JY,
    \end{split}
  \end{equation}
  for some vector fields \( \xi \) and \( \zeta \).
\end{proposition}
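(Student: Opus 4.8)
The plan is to regard \eqref{e-Kahler structure} as defining, at a fixed point, a linear map
\[
  \Phi\colon V\oplus V\longrightarrow \K^{\e}(V),\qquad (\xi,\zeta)\longmapsto S,
\]
and to prove that its image is exactly the linear-type submodule; the proposition then reduces to the two inclusions $\im\Phi\subseteq(\text{linear type})$ and $\im\Phi\supseteq(\text{linear type})$. The key structural input, already contained in the decompositions recalled above, is that the linear-type submodule is precisely the sum of the copies of the standard module $\rcomp{\Lambda^{1,0}}=V^*$ inside $\K^{\e}(V)$: for $\e=-1$ it is $\K^{-1}_2\oplus\K^{-1}_4$, and for $\e=1$ it is $\K^{1}_2\oplus\K^{1}_4\oplus\K^{1}_6\oplus\K^{1}_8\cong T^*M\oplus T^*M$. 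By those decompositions $V^*$ occurs with multiplicity two, so this submodule has real dimension $2\dim V$, while the remaining irreducibles ($\rcomp{S^{2,1}_0}$, $\rcomp{\Lambda^{2,1}_0}$ and their para-analogues) contain no constituent of $V^*$.

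First I would check that $\Phi$ really lands in $\K^{\e}(V)$, i.e.\ that for arbitrary $\xi,\zeta$ the right-hand side of \eqref{e-Kahler structure} satisfies $S_{XYZ}=-S_{XZY}$ and $S_{XJYJZ}=-\e S_{XYZ}$. This is a direct verification, handling the four $\xi$-terms and the single $\zeta$-term separately and using only that $J$ is skew and that $g(JA,JB)=-\e g(A,B)$ (which follows from $J\in\so(TM)$ and $J^2=\e$). Since $\Phi$ is built naturally out of $g$ and $J$, it is equivariant for the isotropy representation ($\Un(r,s)$ for $\e=-1$, $\GL(n,\R)$ for $\e=1$). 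Every irreducible constituent of the invariant subspace $\im\Phi$ is therefore a constituent of $V\oplus V$; by the decomposition recalled above such constituents occur in $\K^{\e}(V)$ only inside the linear-type submodule, giving $\im\Phi\subseteq(\text{linear type})$, and in particular showing that every $S$ of the form \eqref{e-Kahler structure} is of linear type.

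For the reverse inclusion I would argue by dimension, showing that $\Phi$ is injective. Reading $\Phi(\xi,\zeta)=0$ as an identity between the endomorphisms $Y\mapsto S_XY$ as $X$ varies, the four $\xi$-terms together have rank at most $4$, whereas the $\zeta$-term is the multiple $-2g(\zeta,JX)J$ of the invertible operator $J$; since $\dim V\geqslant 4$, choosing $X$ with $g(\zeta,JX)\neq0$ forces a contradiction unless $\zeta=0$, after which a single further evaluation gives $\xi=0$. Thus $\Phi$ is injective, and since $\dim(V\oplus V)=2\dim V$ equals the dimension of the linear-type submodule, the inclusion of the previous paragraph is an equality. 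Hence $S$ is of linear type if and only if $S\in\im\Phi$, that is, if and only if it has the form \eqref{e-Kahler structure}.

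The main obstacle is the bookkeeping that makes the Schur-type argument rigorous: one must be certain that $\im\Phi$ fills the \emph{whole} multiplicity-two part and does not leak into the higher irreducibles, which is why I rely on the multiplicity count from the recalled decomposition rather than on an explicit projection. This caution is genuinely necessary, since the metric traces of \eqref{e-Kahler structure} all return only the single combination proportional to $n\xi+\e\zeta$, so $\xi$ and $\zeta$ cannot be recovered by one contraction. Two further points deserve attention: the rank argument for injectivity must be checked separately in the boundary case $\dim V=4$, where $J$ and the $\xi$-terms have equal rank; and in the para-K\"ahler case one should match the four linearly growing submodules to the $\pm1$-eigenspaces $V=V^+\oplus V^-$ of $J$ when identifying the relevant isotypic part.
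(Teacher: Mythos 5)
The paper offers no proof of this proposition at all: it is quoted from \cite{GO1}, where the equivalence is extracted from the explicit defining conditions of the classes \( \K^{\e}_i \). Your argument --- equivariance of \( \Phi \), Schur's lemma to confine its image to the isotypic component of \( V^* \) inside \( \K^{\e}(V) \), then injectivity plus a dimension count --- is therefore a genuinely different route, and most of it is sound: the symmetry verification is exactly the computation you describe, the multiplicity-two statement does follow from the decompositions recalled in Section~\ref{sec:class-homog-struct}, and for \( \dim V>4 \) your injectivity argument can be completed (once \( \zeta=0 \), evaluating at \( Y=\xi \) kills \( \xi \) in every case, including the para-K\"ahler possibility \( J\xi=\pm\xi \)).

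The genuine hole is the case \( \dim V=4 \), which the proposition covers (the paper assumes \( \dim M=2n\geqslant 4 \)) and which you flag but do not close: there \( J \) and the four \( \xi \)-terms have equal rank, so the rank comparison yields nothing, and without injectivity the dimension count collapses. Moreover, the remark you use to justify working abstractly --- that all traces of \eqref{e-Kahler structure} return only the combination \( n\xi+\e\zeta \) --- is not correct: it holds for the purely metric contractions, but the \( J \)-twisted contraction is equally equivariant and gives an independent combination. Indeed, for an orthonormal basis \( \{e_r\} \) with \( \varepsilon^r=g(e_r,e_r) \), one computes from \eqref{e-Kahler structure}
\begin{equation*}
  \sum_r \varepsilon^r S_{e_re_rZ}=2g(n\xi+\e\zeta,Z),
  \qquad
  \sum_r \varepsilon^r S_{Xe_rJe_r}=-4g\bigl(X,J(\xi+n\e\zeta)\bigr),
\end{equation*}
and since \( n\xi+\e\zeta=0=\xi+n\e\zeta \) forces \( (1-n^2)\xi=0 \), these two contractions recover \( (\xi,\zeta) \) for every \( n\geqslant 2 \). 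Replacing your rank argument by this pair of traces closes the four-dimensional case, removes all case distinctions, and in addition shows that \( \xi \) and \( \zeta \) are globally defined smooth vector fields (being traces of the smooth tensor field \( S \)) --- a point that a purely pointwise dimension count leaves open, since the proposition asserts the existence of vector fields, not merely of tangent vectors at each point.
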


\begin{definition}
A homogeneous \( \e \)-K\"ahler structure of linear type \( S \) given by
 formula \eqref{e-Kahler structure} is called (see \cite{BGO})
 \begin{enumerate}
 \item[(i)] \emph{non-degenerate} if \( g(\xi,\xi)\neq 0 \),
 \item[(ii)] \emph{degenerate} if \( g(\xi,\xi)=0 \),
 \item[(iii)] \emph{strongly degenerate} if \( g(\xi,\xi)=0 \) and \(
   \zeta=0 \).
 \end{enumerate}
\end{definition}

It is a straightforward computation to prove (see \cite{BGO})

\begin{proposition}
  A tensor field \( S \) on \( (M,g,J) \) defined by formula
  \eqref{e-Kahler structure} is a homogeneous \( \e \)-K\"ahler
  structure if and only if
  \begin{equation*}
    \wnabla\xi=0,\qquad \wnabla\zeta=0, \qquad \wnabla R=0.
  \end{equation*}
  where \( \wnabla=\nabla-S \).
\end{proposition}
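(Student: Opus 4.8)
The plan is to observe first that, for a tensor of the special shape \eqref{e-Kahler structure}, the four equations \eqref{AS+J} defining a homogeneous \( \e \)-K\"ahler structure collapse to only two. Indeed, the right-hand side of \eqref{e-Kahler structure} is antisymmetric in its last two arguments and satisfies \( S_{XJYJZ}=-\e S_{XYZ} \), so pointwise \( S\in\K^{\e}(V) \), and a direct check shows that these two algebraic symmetries alone force \( \wnabla g=0 \) and \( \wnabla J=0 \) for \( \wnabla=\nabla-S \): skew-symmetry in the last two slots gives \( (\wnabla_W g)(X,Y)=S_{WXY}+S_{WYX}=0 \), while \( S_{XJYJZ}=-\e S_{XYZ} \) is equivalent to \( [S_W,J]=0 \), i.e.\ \( \wnabla J=0 \). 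Hence \( S \) is a homogeneous \( \e \)-K\"ahler structure if and only if \( \wnabla R=0 \) and \( \wnabla S=0 \); as \( \wnabla R=0 \) sits on both sides of the asserted equivalence, everything reduces to proving \( \wnabla S=0\Longleftrightarrow\wnabla\xi=0,\ \wnabla\zeta=0 \).

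The next step is to compute \( \wnabla S \). Since \( \wnabla \) is a metric connection with \( \wnabla J=0 \), differentiating \eqref{e-Kahler structure} simply replaces \( \xi,\zeta \) by their \( \wnabla \)-derivatives, all other terms cancelling by tensoriality:
\begin{equation*}
  \begin{split}
    (\wnabla_W S)_XY & = g(X,Y)\wnabla_W\xi - g(\wnabla_W\xi,Y)X + \e g(X,JY)J\wnabla_W\xi \eqbreak - \e g(\wnabla_W\xi,JY)JX - 2g(\wnabla_W\zeta,JX)JY.
  \end{split}
\end{equation*}
Thus \( \wnabla_W S \) is again of the form \eqref{e-Kahler structure}, now built from \( \wnabla_W\xi \) and \( \wnabla_W\zeta \). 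The implication ``\( \Leftarrow \)'' is then immediate: if \( \wnabla\xi=\wnabla\zeta=0 \) the right-hand side vanishes, and combined with the standing hypothesis \( \wnabla R=0 \) and the automatic \( \wnabla g=\wnabla J=0 \) this exhibits \( S \) as a homogeneous \( \e \)-K\"ahler structure.

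For the converse I would reduce matters to one algebraic fact: the linear map \( \phi\colon(u,v)\mapsto S^{(u,v)} \), sending a pair of vectors to the right-hand side of \eqref{e-Kahler structure} with \( \xi,\zeta \) replaced by \( u,v \), is \emph{injective}. Granting this, \( \wnabla_W S=0 \) forces \( \wnabla_W\xi=0 \) and \( \wnabla_W\zeta=0 \) for every \( W \), which is exactly \( \wnabla\xi=\wnabla\zeta=0 \). I expect the injectivity to be the only genuine obstacle, precisely because \( \wnabla_W\xi \) and \( \wnabla_W\zeta \) may be null, so \( \phi \) must be shown injective for \emph{arbitrary} vectors, not only non-degenerate ones. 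A contraction argument will not suffice here, since every linear trace of \( S^{(u,v)} \) returns only the single combination \( 2n\,u+2\e\,v \) (up to the action of \( J \)), which cannot separate \( u \) from \( v \).

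To prove injectivity I would therefore use the full tensor. Assuming \( S^{(u,v)}=0 \) and setting \( Y=X \), so that \( g(X,JX)=0 \), yields the pointwise identity \( g(X,X)\,u = g(u,X)X + [\e g(u,JX)+2g(v,JX)]JX \), whence \( u\in\Span\{X,JX\} \) for every non-null \( X \). Fixing one non-null \( X_0 \), the \( J \)-invariant plane \( \Span\{X_0,JX_0\} \) is non-degenerate because \( g(JX_0,JX_0)=-\e\,g(X_0,X_0) \), so its orthogonal complement is a non-degenerate \( J \)-invariant subspace of dimension \( 2n-2\geqslant 2 \) and hence contains a non-null vector \( X_1 \); then \( u \) lies in both \( \Span\{X_0,JX_0\} \) and \( \Span\{X_1,JX_1\}\subseteq\Span\{X_0,JX_0\}^{\perp} \), forcing \( u=0 \). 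Substituting \( u=0 \) back into the identity gives \( g(v,JX)JX=0 \) for all \( X \), so the functional \( g(v,J\cdot) \) vanishes on the non-null vectors and, since these span \( V \), identically, whence \( v=0 \). This establishes injectivity of \( \phi \) and, together with the reductions above, the whole proposition; the argument is uniform in \( \e=\pm1 \), covering the pseudo- and para-K\"ahler cases at once.
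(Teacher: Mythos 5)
Your proof is correct. For comparison: the paper itself gives no argument for this proposition at all --- it calls it ``a straightforward computation'' and defers to \cite{BGO} --- so your write-up is a self-contained version of exactly the computation that citation stands in for. The skeleton is the expected one: the algebraic symmetries of \eqref{e-Kahler structure} make \( \wnabla g=0 \) and \( \wnabla J=0 \) automatic, so \eqref{AS+J} collapses to \( \wnabla R=0 \) plus \( \wnabla S=0 \); and since \( \wnabla \) annihilates \( g \) and \( J \), differentiating \eqref{e-Kahler structure} gives \( \wnabla_W S=S^{(\wnabla_W\xi,\wnabla_W\zeta)} \), where \( S^{(u,v)} \) denotes the right-hand side of \eqref{e-Kahler structure} with \( (\xi,\zeta) \) replaced by \( (u,v) \). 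What you add, and what is genuinely needed in the indefinite setting of this paper, is the injectivity of \( (u,v)\mapsto S^{(u,v)} \) for \emph{arbitrary}, possibly null, vectors: even under the standing hypothesis \( g(\xi,\xi)\neq 0 \), the derivatives \( \wnabla_W\xi \), \( \wnabla_W\zeta \) can a priori be null, so no argument relying on dividing by norms is available. Your two-plane argument handles this correctly: for non-null \( X \), setting \( Y=X \) forces \( u\in\Span\{X,JX\} \); the plane \( \Span\{X_0,JX_0\} \) is \( J \)-invariant and non-degenerate (since \( g(JX_0,JX_0)=-\e\,g(X_0,X_0) \) and \( g(X_0,JX_0)=0 \)), so its orthogonal complement, of dimension \( 2n-2\geqslant 2 \) by the assumption \( \dim M\geqslant 4 \), contains a non-null \( X_1 \), giving \( u=0 \); then \( v=0 \) because non-null vectors span. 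Your side remark that traces cannot separate the two vector fields is also accurate --- the only independent contraction is \( c_{12}(S^{(u,v)})=(2nu+2\e v)^{\flat} \), up to composition with \( J \) --- which is precisely why the full-tensor injectivity argument, rather than a contraction, is the right tool here.
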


The degenerate and strongly degenerate cases are studied in
\cite{CL2,CL}.  In this paper we will focus on the non-degenerate case.

\subsection{Homogeneous \( \epsilon \)-quaternion K\"ahler structures}

Pointwise homogeneous pseudo-quaternion K\"ahler structures (\(
\e=(-1,-1,-1) \)) were classified in \cite{BGO} (see also \cite{CGS},
and \( \cite{Fin} \) for a representation theoretical approach).  It
is a straightforward task to adapt the techniques used in \cite{CGS}
and \cite{Fin} to the case of homogeneous para-quaternion K\"ahler
structures (\( \e=(-1,1,1) \)).  For that reason, for the sake of
brevity we shall only recall some useful formulas and present the main
results.

Let \( (M,g,Q) \) be an \( \e \)-quaternion K\"ahler manifold of
dimension \( 4n \), \( n\geqslant 2 \).  The property that the holonomy
of the Levi-Civita connection \( \nabla \) is contained in \(
Sp^{\e}(n)Sp^{\e}(1) \) is equivalent to
\begin{equation}
  \label{nabla J}
  \nabla_{X}J_i=\sum_{j=1}^3b_{ij}J_j,\qquad i=1,2,3,
\end{equation}
with \( (b_{ij}) \) a matrix of \( \sP^\e(1) \).  Let \( S \) be a
homogeneous \( \e \)-quaternion K\"ahler structure on \( (M,g,Q) \)
and \( \wnabla=\nabla-S \), the equation \( \wnabla \Omega \) is
equivalent to
\begin{equation}
  \label{wnabla J}
  \wnabla_{X}J_i=\sum_{j=1}^3\widetilde{b}_{ij}J_j,\qquad
  i=1,2,3,
\end{equation}
with \( (\widetilde{b}_{ij}) \) a matrix of \( \sP^\e(1) \).  This
implies that
\begin{equation*}
  J_i(S_XY)-S_X(J_iY)=\sum_{j=1}^3c_{ij}J_jY,\qquad i=1,2,3,
\end{equation*}
with \( (c_{ij}) \) a matrix of \( \sP^\e(1) \).  Note that the matrix
\( (c_{ij}) \) can be obtained taking \( S_X\in
\sP^{\e}(n)+\sP^{\e}(1) \) and projecting to the second
summand.  Taking the \( (0,3) \)-tensor field \( S_{XYZ}=g(S_XY,Z) \),
we have that the symmetries satisfied by a homogeneous \( \e
\)-quaternion K\"ahler structure are
\begin{align}
  S_{XYZ}= & -S_{XZY}\label{symmetries of homog e-quat Kahler struct 1}\\
  S_{XJ_aYJ_aZ}+\e_aS_{X,Y,Z}= &
  \e_b\pi^c(X)g(J_bY,J_aZ)-\e_c\pi^b(X)g(J_cY,J_aZ),\label{symmetries
  of homog e-quat Kahler struct 2}
\end{align}
for any cyclic permutation \( (a,b,c) \) of \( (1,2,3) \), where \(
\pi^1,\pi^2,\pi^3 \) are local \( 1 \)-forms on \( M \), and Einstein
summation convention is used.

We now fix a point \( p\in M \) and denote by \( (V,\inp
\cdot\cdot, J_1,J_2,J_3) \) the tangent space of \( (M,g,Q) \) at
\( p \).  We write \( \V \) for the space of tensors on \( V \)
satisfying formulas \eqref{symmetries of homog e-quat Kahler
struct 1} and \eqref{symmetries of homog e-quat Kahler struct 2}
for some \( \pi^1,\pi^2,\pi^3\in V^* \).  Note that \(
Sp^{\e}(n)Sp^{\e}(1) \) acts in a natural way on \( V \), which
induces a representation of \( Sp^{\e}(n)Sp^{\e}(1) \) on \( \V
\).  We can decompose \( \V \) into irreducible \(
Sp^{\e}(n)Sp^{\e}(1) \)-submodules:
\begin{align*}
  \QK^{\e}_1(V) &  = \Set[\Big]{S \in \V}{S_{XYZ}= \sum_{a=1}^3
  \theta(J_aX)\inp{J_aY}Z,\ \theta\in V^*},\\
  \QK_2^{\e}(V) & =\Set[\Big]{S\in\V}{S_{XYZ}= \sum_{a=1}^3
  \theta^a(X) \inp{J_aY}Z,\ \sum_{a=1}^3 \theta^a \circ
  J_a=0,\ \theta^a \in V^*},\\
  \QK_3^{\e}(V) &  = \Bigl\{S \in \V \Bigm\vert S_{XYZ}= \inp XY
  \theta(Z) - \inp XZ \theta(Y)\eqbreak[5]
  - \sum_{a=1}^3 \e_a\bigl(\inp X{J_aY} \theta(J_aZ)
  - \inp X{J_aZ} \theta(J_aY)\bigr),\,\theta\in V^* \Bigl\},\\
  \QK_4^{\e}(V) &  = \Set[\Big]{S \in \V}{6S_{XYZ}
  = \Cyclic_{\SXYZ} S_{XYZ} - \sum_{a=1}^3
  \e_a\Cyclic_{\quad\mathclap{\SXJaYJaZ}}S_{XJ_aYJ_aZ}, c_{12}(S)=0},
  \\
  \QK_5^{\e}(V) &  = \Set[\Big]{S \in \V}{\Cyclic_{\SXYZ}S_{XYZ}=0},
\end{align*}
where \( c_{12}(S)(Z)=\sum_r\varepsilon^rS_{e_re_rZ} \) for any
orthonormal basis \( \{e_r\} \), and \( \varepsilon^r=\langle
e_r,e_r\rangle \).  It is easy to prove that if a homogeneous \( \e
\)-quaternion K\"ahler structure belongs to one of this submodules at
a point \( p\in M \), then it belongs to the same submodule at every
point.  This means that the previous decomposition define global
classes of homogeneous \( \e \)-quaternion K\"ahler structures, which
we shall denote \( \QK^{\e}_1,\dots,\QK^{\e}_5 \).  Note that \(
\dim(\QK^{\e}_1(V)) = \dim(\QK^{\e}_3(V)) = 4n \) and \(
\dim(\QK^{\e}_2(V)) = 8n \), so that the largest class whose pointwise
submodules have dimension growing linearly with the dimension of the
manifold is \( \QK^{\e}_1 \oplus \QK^{\e}_2 \oplus \QK^{\e}_3 \).

\begin{definition}
  A homogeneous \( \e \)-quaternion K\"ahler structure \( S \) is
  called of \emph{linear type} if it belongs to the class \(
  \QK^{\e}_1 \oplus \QK^{\e}_2 \oplus \QK^{\e}_3 \).  In that case
  \begin{multline}
    \label{struct linear type QT}
    S_XY = g(X,Y)\xi - g(Y,\xi)X -
    \sum_{a=1}^3 \e_a\bigl(g(J_aY,\xi)J_aX-g(X,J_aY)J_a\xi\bigr)\\
    +\sum_{a=1}^3 g(X,\zeta^a)J_aY,
  \end{multline}
  for some local vector fields \( \xi,\zeta^a \), \( a=1,2,3 \).
\end{definition}

\begin{definition}
  A homogeneous \( \e \)-quaternion K\"ahler structure of linear type
  is called
  \begin{enumerate}
  \item \emph{non-degenerate} if \( g(\xi,\xi)\neq 0 \), and
  \item \emph{degenerate} if \( g(\xi,\xi)=0 \).
  \end{enumerate}
\end{definition}

The non-zero degenerate case was studied in \cite{CL2} resulting
that \( (M,g,Q) \) must be flat.  In this paper we shall concentrate in
the non-degenerate case.

\section{Non-degenerate homogeneous structures of linear type}
\label{sec:non-degen-homog}

In this section we show that non-degenerate homogeneous
\( \e \)-K\"ahler and \( \e \)-quaternion K\"ahler structures of linear
type characterizes spaces of constant \( \e \)-holomorphic and
\( \e \)-quaternion sectional curvature respectively.

\subsection{\( \e \)-K\"ahler case}

\begin{lemma}
Let \( (M,g,J) \) be a connected \( \e \)-K\"ahler manifold,
\( \dim M=2n\geqslant 4 \), admitting a non-degenerate homogeneous
\( \e \)-K\"ahler structure of linear type \( S \).  Then \( (M,g,J) \) is
Einstein.
\end{lemma}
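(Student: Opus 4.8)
The plan is to extract the Ricci tensor from the three identities \( \wnabla\xi=0 \), \( \wnabla\zeta=0 \), \( \wnabla R=0 \) of the preceding proposition, using that the \( \e \)-K\"ahler condition gives \( \nabla J=0 \). First I would rewrite \( \wnabla\xi=0 \) as \( \nabla_X\xi=S_X\xi \) and substitute \( Y=\xi \) into \eqref{e-Kahler structure}; since \( J \) is \( g \)-skew we have \( g(\xi,J\xi)=0 \), so
\[
  \nabla_X\xi=g(X,\xi)\xi-g(\xi,\xi)X+\e\,g(X,J\xi)J\xi-2g(\zeta,JX)J\xi.
\]
In particular \( X(g(\xi,\xi))=2g(\nabla_X\xi,\xi)=2S_{X\xi\xi}=0 \), so \( c:=g(\xi,\xi) \) is a non-zero constant; likewise the scalars \( g(\xi,J\xi),g(\xi,\zeta),g(J\xi,\zeta),\dots \) are constant because \( \xi,\zeta,J,g \) are all \( \wnabla \)-parallel.

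The core step is a single contraction. Because \( \wnabla R=0 \) and \( \wnabla g=0 \) we get \( \wnabla\mathrm{Ric}=0 \), hence \( \wnabla\sca=0 \), so \( \sca \) is constant and the contracted second Bianchi identity reads \( \mathrm{div}\,\mathrm{Ric}=0 \). Writing \( \wnabla\mathrm{Ric}=0 \) as \( (\nabla_X\mathrm{Ric})(Y,Z)=-\mathrm{Ric}(S_XY,Z)-\mathrm{Ric}(Y,S_XZ) \) and feeding it into \( \sum_i\varepsilon^i(\nabla_{e_i}\mathrm{Ric})(e_i,Z)=0 \), I would use the elementary evaluation \( \sum_i\varepsilon^i S_{e_i}e_i=2n\xi+2\e\zeta \) together with the \( \e \)-K\"ahler identity \( \mathrm{Ric}(JX,JY)=-\e\,\mathrm{Ric}(X,Y) \) (which follows from \( R(X,Y)J=JR(X,Y) \)). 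I expect the \( \zeta \)-dependent terms to cancel in pairs, leaving
\[
  2n\,\mathrm{Ric}(\xi,Z)-\sca\,g(\xi,Z)=0,\qquad\text{that is}\qquad \mathrm{Ric}(\xi,\cdot)=\tfrac{\sca}{2n}\,g(\xi,\cdot).
\]
Applying \( R(X,Y)J\xi=JR(X,Y)\xi \) promotes this to \( \mathrm{Ric}(J\xi,\cdot)=\tfrac{\sca}{2n}g(J\xi,\cdot) \), so the Ricci operator \( \mathrm{Ric}^\sharp \) acts as the scalar \( \sca/2n \) on the plane \( \Span\{\xi,J\xi\} \), which is non-degenerate since \( g(J\xi,J\xi)=-\e c\neq 0 \).

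The remaining, and genuinely delicate, point is to upgrade this to the full Einstein condition \( \mathrm{Ric}=\tfrac{\sca}{2n}g \): neither the divergence identity nor the directional curvature controls \( \mathrm{Ric} \) on the \( g \)-orthogonal complement of \( \Span\{\xi,J\xi\} \). To close the gap I would use that \( \mathrm{Ric}^\sharp \) is \( \wnabla \)-parallel, \( g \)-self-adjoint and commutes with \( J \); hence its eigenvalues are constant on the connected manifold and its eigendistributions are \( \wnabla \)-parallel and \( J \)-invariant, with \( \Span\{\xi,J\xi\} \) split off the \( \sca/2n \)-eigenspace. The plan is then to compute \( R(X,Y)\xi \) explicitly by differentiating the displayed formula for \( \nabla_X\xi \) once more (using \( \nabla J=0 \) and \( \nabla_X\zeta=S_X\zeta \)), to confirm that it equals the constant \( \e \)-holomorphic sectional curvature operator applied to \( \xi \), and to combine this with the trace constraint \( \Tr\mathrm{Ric}=\sca \) and the invariance of the eigendistributions to force a single eigenvalue on the complement. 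I anticipate that verifying the cancellation of all \( \zeta \)-terms in these computations, and ruling out spurious eigenvalues on the complement of \( \Span\{\xi,J\xi\} \), will be the main obstacles.
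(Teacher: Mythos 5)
Your first step is correct, and it is a genuinely more economical route to the paper's intermediate identity. The evaluations you anticipate do hold: \( \sum_i\varepsilon^iS_{e_i}e_i=2n\xi+2\e\zeta \), the \( \zeta \)-terms cancel in pairs in the divergence computation, and \( \wnabla\mathrm{Ric}=0 \) (hence \( \sca \) constant) together with \( \mathrm{div}\,\mathrm{Ric}=0 \) yields \( \mathrm{Ric}(\xi,\cdot)=\tfrac{\sca}{2n}g(\xi,\cdot) \). The paper reaches this same identity by contracting its formula \eqref{formula1} a second time, so up to this point the two arguments agree in substance.

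The final step, however, has a genuine gap --- one you partly acknowledge. First, the eigendistribution framework is not available here: for indefinite \( g \), which is the case of interest in this paper, a \( g \)-self-adjoint operator such as \( \mathrm{Ric}^{\sharp} \) need not be diagonalizable; it can have complex eigenvalues or nontrivial Jordan blocks, so its eigenspaces need not span \( TM \), and ``a single eigenvalue on the complement'' is not even the right dichotomy. Second, and more fundamentally, the ingredients you assemble cannot determine \( \mathrm{Ric} \) on \( \Span\{\xi,J\xi\}^{\bot} \). Differentiating \( \nabla\xi=S\xi \) gives, via the pair symmetry of \( R \), exactly the curvature components having at least one \( \xi \)-slot; but for \( X,Y\perp\Span\{\xi,J\xi\} \) the quantity \( \mathrm{Ric}(X,Y) \) is a sum of components \( R_{e_iXYe_i} \) none of which involves \( \xi \), and the trace constraint together with \( \wnabla \)-parallelism of eigendistributions is compatible with any assortment of distinct constant eigenvalues. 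What is missing is precisely the tool the paper uses: substituting the linear-type form of \( S \) into \( \wnabla R=0 \) via \eqref{nablaR} and applying the uncontracted second Bianchi identity produces a \( 3 \)-form identity involving the full curvature (recast as \eqref{formula3}); contracting it \emph{once}, not twice, gives \eqref{formula1}, which expresses \( (2n+2)R_{ZY\xi U} \) in terms of the Ricci tensor evaluated on arbitrary arguments. This is what couples the \( \xi \)-components of the curvature to \( \mathrm{Ric} \) on all of \( TM \): feeding \eqref{formula1} (in the form \eqref{formula2}) back into \eqref{formula3}, setting \( W=\xi \), and contracting with \( \xi \) and then \( J\xi \) forces \( \mathrm{Ric}=\tfrac{\sca}{2n}g \). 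Your double contraction \( \mathrm{div}\,\mathrm{Ric}=0 \) discards exactly this once-contracted information, and nothing in your sketch recovers it; to close the argument you should return to the once-contracted Bianchi identity rather than to the spectral theory of the Ricci operator.
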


\begin{proof}
The following proof has been adapted to the pseudo-Riemannian
setting from one appearing in \cite{GMM} in the Riemannian case.
Equation \( \wnabla R=0 \) reads
\begin{equation}
  \label{nablaR}
  \left(\nabla_XR\right)_{YZWU}=-R_{S_XYZWU}-R_{YS_XZWU}-R_{YZS_XWU}-R_{YZWS_XU},
\end{equation}
so applying the second Bianchi identity and substituting
\eqref{e-Kahler structure} we have
\begin{equation*}
  \begin{split}
    0 & =  \Cyclic_{\SXYZ}\bigl\{2g(X,\xi)R_{YZWU} + g(X,W)R_{YZ\xi U} +
    g(X,U)R_{YZW\xi} \eqbreak + 2\e g(X,JY)R_{J\xi ZWU} + \e
    g(X,JW)R_{YZJ\xi U} + \e g(X,JU)R_{YZWJ\xi}\bigr\}.
  \end{split}
\end{equation*}
Since \( g(\xi,\xi)\neq 0 \) we can choose an orthonormal basis
including  \( \xi/\sqrt{|g(\xi,\xi)|} \).  Contracting the previous
formula with respect to \( X \) and \( W \) and applying first
Bianchi identity we obtain
\begin{equation}
  \label{formula1}
  \begin{split}
    (2n+2)R_{ZY\xi U} & =
    -2g(Y,\xi)r(Z,U) + 2g(Z,\xi)r(Y,U) \eqbreak
    -2\e g(Y,JZ)r(J\xi,U) - g(Y,U)r(Z,\xi) \eqbreak
    -\e g(Y,JU)r(Z,J\xi) + g(Z,U)r(Y,\xi) \eqbreak
    +\e g(Z,JU)r(Y,J\xi),
  \end{split}
\end{equation}
where $r$ is the Ricci curvature.  Denoting  the scalar
curvature by $\sca$, we can deduce \( r(Z,\xi)=(\sca/2n)g(Z,\xi) \) by
contracting \eqref{formula1} with respect to \( Y \) and \( U \)
with the same orthonormal basis as before.  Setting \( a=1/(2n+2)
\) and \( b=\sca/2n \), we can write
\begin{equation}
  \label{formula2}
  \frac{1}{a}R_{\xi U}=2\theta\wedge r(U)-2b\e \theta(JU)F +
  bU^{\flat}\wedge\theta+b(JU)^{\flat}\wedge(\theta\circ J),
\end{equation}
where \( F \) is the symplectic form associated to \( g \) and \( J \).
Using the identity \( R_{WUJ\xi\cdot}=R_{\xi
JWU\cdot}-R_{\xi JUW\cdot} \) we can write \eqref{formula1} as
\begin{equation}\label{formula3}
  \begin{split}
    0 & = 2\theta\wedge R_{WU} + W^{\flat}\wedge R_{\xi
    U}-U^{\flat}\wedge R_{\xi W} \eqbreak
    -2\e F\wedge(R_{\xi JUW}-R_{\xi JWU}) \eqbreak
    -\e (JW)^{\flat}\wedge R_{\xi JU}+\e (JU)^{\flat}\wedge R_{\xi JW}.
  \end{split}
\end{equation}
Denoting  the right hand side of \eqref{formula2} by \( \Xi(U) \) and
substituting in \eqref{formula3} we obtain
\begin{equation*}
  \begin{split}
    0 & = \frac{2}{a}\theta\wedge R_{WU}+W^{\flat}\wedge
    \Xi(U)-U^{\flat}\wedge \Xi(W) \eqbreak
    -2\e F\wedge \left(i_{W}\Xi(JU)-i_{U}\Xi(JW)\right) \eqbreak
    -\e (JW)^{\flat}\wedge \Xi(JU)+\e (JU)^{\flat}\wedge \Xi(JW).
  \end{split}
\end{equation*}
Taking \( W=\xi \) the previous formula transforms into
\begin{equation*}
  0 = \e(2g(\xi,\xi)F+\theta\wedge(\theta\circ J))\wedge(r(JU)-b(JU)^{\flat}),
\end{equation*}
and contracting first with \( \xi \) and then with \( J\xi \) we obtain
\begin{equation*}
  g(\xi,\xi)(r(JU)-b(JU)^{\flat})=0.
\end{equation*}
Since \( g(\xi,\xi)\neq 0 \) we deduce that the manifold is
Einstein.
\end{proof}

\begin{theorem}
  \label{Teorema principal complex}
  Let \( (M,g,J) \) be a connected \( \e \)-K\"ahler manifold, \( \dim
  M=2n\geqslant 4 \), admitting a non-degenerate homogeneous \( \e
  \)-K\"ahler structure \( S \) of linear type.  Then \( (M,g,J) \) has
  constant \( \e \)-holomorphic sectional curvature \( c=-4g(\xi,\xi)
  \) and \( \zeta=0 \).
\end{theorem}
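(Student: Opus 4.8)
The plan is to build the full curvature tensor out of the partial information that the preceding Lemma already extracts. Since \( (M,g,J) \) is Einstein we may write the Ricci tensor as \( r = b\,g \) with \( b = \sca/2n \) constant. First I would substitute \( r = b\,g \) into \eqref{formula1}; the right-hand side then collapses to an explicit expression built from \( g \), \( J \) and \( \xi \), giving a closed formula for the components \( R_{ZY\xi U} \) of the curvature with one entry equal to \( \xi \). Inspecting this formula shows that it coincides with the one-argument-\( \xi \) evaluation of the model tensor of constant \( \e \)-holomorphic sectional curvature, the constant being forced to \( c = -2b/(n+1) \).

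Next I would promote this to the full curvature tensor by means of the cyclic (second Bianchi) identity displayed just before \eqref{formula1}. Because \( J \) is parallel, \( R(\cdot,\cdot) \) commutes with \( J \), so every summand of that identity containing \( J\xi \) rewrites through \( R(\cdot,\cdot)J\xi = J\,R(\cdot,\cdot)\xi \) as a summand containing only \( \xi \); after the first step all such terms are therefore known. Evaluating the identity at \( X=\xi \), the cyclic images place \( \xi \) (or, via the parallelism of \( J \), \( J\xi \)) in a curvature slot of every term except \( 2g(\xi,\xi)R_{YZWU} \), so this is the only unknown; as \( g(\xi,\xi)\neq 0 \) I can solve for \( R_{YZWU} \) explicitly. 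Simplifying the result should reproduce exactly the constant \( \e \)-holomorphic sectional curvature tensor, so \( (M,g,J) \) is a space form of some constant \( c \).

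It remains to identify \( c \) and to show \( \zeta=0 \); for this \( \zeta \) must be fed in, as it does not appear in the Bianchi identity above. Here I would exploit that \( \wnabla\xi=0 \) together with \( \wnabla S=0 \) yields the purely algebraic identity
\[
  R(X,Y)\xi = S_X(S_Y\xi)-S_Y(S_X\xi)-S_{S_XY}\xi+S_{S_YX}\xi .
\]
I would expand the right-hand side using \eqref{e-Kahler structure}, noting \( g(\xi,J\xi)=0 \) and that \( g(\xi,\xi) \) is \( \wnabla \)-parallel hence constant, and compare with the model value of \( R(X,Y)\xi \) from the previous step. Matching the terms proportional to \( X \) forces \( c=-4g(\xi,\xi) \), while matching the terms proportional to \( JX \) gives \( 2g(\xi,\xi)\,g(\zeta,JY)=0 \) for all \( Y \); since \( g(\xi,\xi)\neq0 \), \( J \) is invertible and \( g \) is non-degenerate, this yields \( \zeta=0 \).

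The main difficulty I expect is bookkeeping rather than conceptual: both the Bianchi contraction and the expansion of the quadratic expression \( S_\bullet S_\bullet\xi \) produce many terms, and the two signs \( \e=\pm1 \) must be carried uniformly so that the pseudo-K\"ahler and para-K\"ahler cases are treated simultaneously. The one genuinely new feature compared with the purely (pseudo-)Riemannian situation is the vector field \( \zeta \); it is annihilated by the Bianchi machinery and survives only in the \( JX \)-component of \( R(X,Y)\xi \), so it is precisely this last comparison that forces it to vanish.
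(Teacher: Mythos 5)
Your proposal is correct, and it is assembled from the same three ingredients as the paper's proof: the Einstein Lemma together with \eqref{formula1} (giving \( R_{YZ\xi W}=cR^0_{YZ\xi W} \)), the cyclic second-Bianchi identity displayed in the Lemma's proof, and the expression of \( R(\cdot,\cdot)\xi \) purely in terms of \( S \) --- your algebraic identity is exactly \( \widetilde{R}(X,Y)\xi=0 \) rewritten via \( \widetilde{R}=R-R^S \), which is what the paper computes covariantly as \eqref{formula5}; in particular the \( JX \)-coefficient \( 2g(\xi,\xi)g(\zeta,JY) \) you isolate is precisely the corresponding term of the paper's tensor \( \Theta^{\zeta} \). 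The one structural difference is the order of deduction. The paper first pins down \( c=-g(\xi,\xi) \) and \( \zeta=0 \) by comparing \eqref{formula4} with \eqref{formula5} for \( Y=JX \), \( X\in\Span\{\zeta,J\zeta\}^{\bot} \), and only afterwards recovers the full tensor, introducing \( A=R+g(\xi,\xi)R^0 \) so that \( \theta\wedge A_{WU}=0 \) contracted with \( \xi \) (using \( A_{\cdot\cdot\xi\cdot}=0 \)) gives \( A=0 \). You reverse the steps: you solve the Bianchi identity at \( X=\xi \) for the whole tensor first --- legitimate precisely because, as you observe, the \( \zeta \)-terms cancel out of that identity --- and identify \( c \) and kill \( \zeta \) afterwards. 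Your ordering dispenses with the auxiliary tensor \( A \) and the special vector choices, but carries an extra obligation: checking that the expression solved for really simplifies to \( cR^0 \). The cleanest justification is that every \( S_X \) is skew-symmetric and commutes with \( J \), so \( S_X\cdot R^0=0 \); hence \( cR^0 \) satisfies the same linear identity with the same \( \xi \)-slot values as \( R \), and uniqueness of the solution (from \( g(\xi,\xi)\neq 0 \)) forces \( R=cR^0 \). One small caution on bookkeeping, which you anticipated: in the paper's conventions the intermediate constant is \( \sca/(4n(n+1))=b/(2(n+1)) \) as coefficient of \( R^0 \), not \( -2b/(n+1) \); the slip is immaterial for the theorem because your final coefficient matching independently forces the coefficient to be \( -g(\xi,\xi) \), i.e.\ holomorphic sectional curvature \( -4g(\xi,\xi) \), and \( \zeta=0 \), exactly as in the paper.
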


\begin{proof}
  Since by the previous Lemma \( (M,g,J) \) is Einstein, formula
  \eqref{formula1} transforms into
  \begin{equation*}
    R_{YZ\xi W}=cR^{0}_{YZ\xi W},
  \end{equation*}
  where \( c = \sca/(4n(n+1)) \) and \( R^{0} \) is the curvature of
  the \( \e \)-complex hyperbolic space of (real) dimension \( 2n \),
  i.e.,
  \begin{equation*}
    \begin{split}
      R^0_{XYZW}& = g(Y,Z)g(X,W)-g(X,Z)g(Y,W)+\e g(X,JZ)g(Y,JW)
      \eqbreak -\e g(x,JW)g(Y,JZ)+2\e g(X,JY)g(Z,JW).
    \end{split}
  \end{equation*}
  This implies that
  \begin{equation}
    \label{formula4}
    R_{XJX}\xi=c\left\{-2g(JX,\xi)X+2g(X,\xi)JX-2g(X,X)J\xi\right\}.
  \end{equation}
  On the other hand, \( \wnabla \xi=0 \) is equivalent to \(
  \nabla_X\xi = S_X\xi \).  Using this in
  \begin{equation*}
    R_{XJX}\xi=\nabla_{[X,JX]}\xi-\nabla_X\nabla_{JX}\xi+\nabla_{JX}\nabla_X\xi,
  \end{equation*}
  we get
  \begin{equation}
    \label{formula5}
    R_{XJX}\xi=-g(\xi,\xi)R^0_{XJX}\xi+\Theta^{\zeta}_{XJX}\xi,
  \end{equation}
  where
  \begin{equation}
    \begin{split}
      \Theta^{\zeta}_{XY}\xi
      &=2g(X,J\zeta)\left\{g(Y,J\xi)\xi+g(\xi,\xi)JY+2\e
        g(\zeta,Y)J\xi\right\} \\
      & -2g(Y,J\zeta)\left\{g(X,J \xi)\xi+g(\xi,\xi)JX+2\e
        g(X,\zeta)J\xi\right\} \\
      & +2\left\{g(Y,\zeta)g(\xi,JX) - g(X,\zeta)g(\xi,JY) +
        2g(X,JY)g(\xi,\zeta)\right\}J\xi.
    \end{split}
  \end{equation}
  Taking \( Y=X \) and \( X\in\Span\{\zeta,J\zeta\}^{\bot} \), and
  comparing formulas \eqref{formula4} and \eqref{formula5}, we have
  that \( c=-g(\xi,\xi) \) and \( g(\xi,\zeta)=0 \).  In addition, this
  implies that \( \Theta^{\zeta}_{XJX}\xi=0 \), whence \( 2\e
  g(\xi,\xi)g(X,\zeta)=0 \).  This together with \( g(\xi,\zeta)=0 \)
  gives \( \zeta=0 \).

  Let now\( A=R+g(\xi,\xi)R^0 \).  A direct computation from
  \eqref{nablaR} gives
  \begin{equation*}
    \begin{split}
      (\nabla_XR)_{YZWU} & =
      g(Y,\xi)A_{XZWU}+g(Z,\xi)A_{YXWU}+g(W,\xi)A_{YZXU} \eqbreak
      +g(U,\xi)A_{YZWX}-g(JY,\xi)A_{JXZWU}-g(JZ,\xi)A_{YJXWU} \eqbreak
      -g(JW,\xi)A_{YZJXU}-g(JU,\xi)A_{YZWJX}.
    \end{split}
  \end{equation*}
  Since \( A \) satisfies first Bianchi identity, taking cyclic sum in
  \( X,Y,Z \) we obtain
  \begin{equation*}
    0=-2\Cyclic_{\SXYZ}g(X,\xi)A_{YZWU},
  \end{equation*}
  which is equivalent to \( \theta\wedge A_{WU}=0 \).  Contracting with
  \( \xi \) and taking into account that \( A_{YZ\xi W}=0 \) we have
  that
  \begin{equation*}
    0=g(\xi,\xi)A_{WU},
  \end{equation*}
  hence \( A_{WU}=0 \).  This proves that \( (M,g,J) \) has constant \(
  \e \)-holomorphic sectional curvature \( -4g(\xi,\xi) \).
\end{proof}

\begin{remark}
  For \( \e=-1 \), let \( (M,g,J) \) be  connected,
  simply-connected and complete, with a non-degenerate
  homogeneous pseudo-K\"ahler structure of linear type given by the
  vector field~\( \xi \).  If \( g(\xi,\xi)>0 \) then \(
  c=-4g(\xi,\xi)<0 \), so \( M=\C H^n_s \) for some \( s=0,\dots,n-1
  \), with the negative definite case excluded.  Similarly, if \(
  g(\xi,\xi)<0 \) then \( c>0 \), so \( M=\C P^n_s \) for some \(
  s=1,\dots,n \).
\end{remark}

\subsection{\( \e \)-quaternion K\"ahler case}

\begin{theorem}\label{Teorema principal quat}
  Let \( (M,g,Q) \) be a connected \( \epsilon \)-quaternion K\"ahler
  manifold of dimension \( 4n\geqslant 8 \) admitting a non-degenerate
  homogeneous \( \e \)-quaternion K\"ahler structure of linear type \(
  S \).  Then \( S\in\QK_3^{\e} \) and \( (M,g,Q) \) has constant \( \e
  \)-quaternion sectional curvature \( -4g(\xi,\xi) \).
\end{theorem}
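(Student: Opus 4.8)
The plan is to mirror the proof of Theorem~\ref{Teorema principal complex}, exploiting the fact that here the Einstein lemma is free: an \( \e \)-quaternion K\"ahler manifold is automatically Einstein, and by the curvature decomposition recalled in Section~\ref{sec:preliminaries} we may write \( R=\nu_q R^0+R^{\sP^{\e}(n)} \) with \( \nu_q=\sca/(16n(n+2)) \) and \( R^{\sP^{\e}(n)} \) commuting with each \( J_a \).  Reformulated, the two things to establish are that the local fields \( \zeta^a \) of \eqref{struct linear type QT} all vanish (so that \( S\in\QK_3^{\e} \)), and that \( R^{\sP^{\e}(n)}=0 \) with \( \nu_q=-g(\xi,\xi) \) (so that \( R=-g(\xi,\xi)R^0 \), i.e.\ constant \( \e \)-quaternion sectional curvature \( -4g(\xi,\xi) \)).

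First I would produce two independent expressions for the curvature contracted against \( \xi \).  On the one hand, writing \( \wnabla R=0 \) as in \eqref{nablaR}, substituting \eqref{struct linear type QT}, applying the second Bianchi identity and contracting twice against an orthonormal basis containing \( \xi/\sqrt{\abs{g(\xi,\xi)}} \) (as in the passage producing \eqref{formula1}), the Einstein condition collapses the Ricci terms and yields \( R_{YZ\xi W}=c\,R^0_{YZ\xi W} \) for a constant \( c \) proportional to \( \sca \).  On the other hand \( \wnabla\xi=0 \) (which follows by contracting \( \wnabla S=0 \)) gives \( \widetilde R_{XY}\xi=0 \) for the canonical connection; combined with the identity \( \widetilde R_{XY}Z=R_{XY}Z-S_XS_YZ+S_YS_XZ+S_{S_XY-S_YX}Z \) — a consequence of \( \wnabla S=0 \) and the torsion-freeness of \( \nabla \) — this writes \( R_{XY}\xi \) purely algebraically in \( \xi,\zeta^a,g \) and the \( J_a \).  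A direct expansion should put it in the form \( R_{XY}\xi=-g(\xi,\xi)R^0_{XY}\xi+\Theta^{\zeta}_{XY}\xi \), the quaternionic analogue of \eqref{formula5}, with \( \Theta^{\zeta} \) carrying all the \( \zeta^a \)-dependence.

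Comparing the two expressions on \( R_{XJ_aX}\xi \) then forces \( \Theta^{\zeta}_{YZ}\xi=(c+g(\xi,\xi))R^0_{YZ}\xi \).  Evaluating on vectors \( X \) orthogonal to the span of all \( \zeta^a \) and their images \( J_b\zeta^a \) annihilates \( \Theta^{\zeta} \) and yields \( c=-g(\xi,\xi) \) together with \( g(\xi,\zeta^a)=0 \) for \( a=1,2,3 \); feeding these back, the vanishing of \( \Theta^{\zeta} \) in the remaining directions should force each \( \zeta^a=0 \), giving \( S\in\QK_3^{\e} \).  I expect this extraction to be the main obstacle: with three anticommuting structures \( \Theta^{\zeta} \) has far more terms than in the K\"ahler case, and one must track how the three \( \zeta^a \) interact — keeping in mind that \( \wnabla \) rotates the \( J_a \) within \( \sP^{\e}(1) \) by \eqref{wnabla J} rather than fixing them individually — to rule out a cancellation that would hide a nonzero \( \zeta^a \).

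Finally, with \( \zeta^a=0 \) and \( c=-g(\xi,\xi) \) in hand, set \( A=R+g(\xi,\xi)R^0 \); the first expression gives \( A_{YZ\xi W}=0 \).  Rewriting \eqref{nablaR} through \( A \) (a direct computation once \( \zeta^a=0 \)), taking the cyclic sum in \( X,Y,Z \) and using that \( A \) obeys the first Bianchi identity yields \( \theta\wedge A_{WU}=0 \) with \( \theta=g(\xi,\cdot) \); contracting with \( \xi \) and using \( A_{YZ\xi W}=0 \) gives \( g(\xi,\xi)A_{WU}=0 \), hence \( A=0 \).  Thus \( R=-g(\xi,\xi)R^0 \), so \( (M,g,Q) \) has constant \( \e \)-quaternion sectional curvature \( -4g(\xi,\xi) \), exactly as in the closing argument of Theorem~\ref{Teorema principal complex}.
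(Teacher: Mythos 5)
Your three-step skeleton is the right shape, and it is closer to the paper's actual argument than it might appear: the paper also (a) derives a cyclic Bianchi identity from \( \wnabla R=0 \) and contracts it to kill curvature terms with \( \xi \) in a slot, (b) uses \( \wnabla S=0 \) (together with \eqref{wnabla J}) to get \( \wnabla\xi=0 \) and \( \wnabla_Z\zeta^a=\sum_b\widetilde b_{ba}(Z)\zeta^b \), then compares the resulting algebraic expression for \( R_{XY}\xi \) with the model curvature to obtain \( \nu_q=-g(\xi,\xi) \), \( g(\xi,\zeta^a)=0 \), \( g(J_b\zeta^a,\xi)=0 \), hence \( \zeta^a=0 \); and (c) finishes with a wedge-and-contract argument. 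The paper does (a)+(c) first, proving full constancy \( R=\nu_qR^0 \), and only then (b); you propose (a) in contracted form, then (b), then (c). That reordering is harmless: every comparison in (b) involves curvature terms with \( \xi \) in one slot, so your contracted identity \( R_{YZ\xi W}=cR^{0}_{YZ\xi W} \) suffices for it.

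The genuine gap is the claim that the computations of Theorem~\ref{Teorema principal complex} carry over ``as in'' the K\"ahler case when applied to \( R \) itself (your first step) and to \( A=R+g(\xi,\xi)R^0 \) (your last step). Those K\"ahler computations rely on the two-slot identities \( R_{J X\,Y ZW}+R_{X\,JY\,ZW}=0 \) and \( R_{XY\,JZ\,W}+R_{XY\,Z\,JW}=0 \), i.e.\ on \( [R_{XY},J]=0 \), which holds because \( \nabla J=0 \). In the \( \e \)-quaternion K\"ahler case \( \nabla J_a=\sum_j b_{aj}J_j \) only, and \( [R_{XY},J_a] \) is a \emph{non-zero} combination of the \( J_b \) (it equals \( \nu_q[R^0_{XY},J_a] \)); only the four-slot derivation identity \( J_a\cdot R=0 \) survives. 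Consequently the terms generated by \( -\e_ag(J_aY,\xi)J_aX \) and \( \e_ag(X,J_aY)J_a\xi \) in \eqref{struct linear type QT} do not cancel under the cyclic sum the way they do after \eqref{nablaR}, the Ricci terms do not collapse to \( cR^0 \) as in \eqref{formula1}, and \( A_{J_a\xi ZWU} \) cannot be rewritten as \( -A_{\xi J_aZWU}=0 \). This is exactly why the paper never runs the argument on \( R \): it passes to the component \( R^{\sP^{\e}(n)} \) in \( R=\nu_qR^0+R^{\sP^{\e}(n)} \), using \( \nabla R^0=0 \) and \( S\cdot R^0=0 \) to turn \( \wnabla R=0 \) into the equation \eqref{formula 2 AS} for \( R^{\sP^{\e}(n)} \) alone, and then exploits the two properties that replace ``Einstein'' and the K\"ahler identities: \( R^{\sP^{\e}(n)} \) is traceless (so the double contraction gives \( R^{\sP^{\e}(n)}_{YZ\xi U}=0 \) outright) and it commutes with each \( J_a \) (so the cyclic sums collapse to \eqref{wedge 1} and \eqref{wedge 2}, forcing \( R^{\sP^{\e}(n)}=0 \)). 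Your plan is repairable with tools you already invoke: run the first step on \( R^{\sP^{\e}(n)} \) rather than on \( R \), and observe that once \( c=\nu_q=-g(\xi,\xi) \) is known your \( A \) \emph{is} \( R^{\sP^{\e}(n)} \), so it does commute with the \( J_a \) and the closing argument becomes legitimate. Two smaller points: \( \wnabla\xi=0 \) is not obtained by contracting \( \wnabla S=0 \) (the trace of \( S \) is proportional to \( (4n+2)\xi+\sum_aJ_a\zeta^a \), which mixes \( \xi \) with the \( \zeta^a \); the paper instead evaluates \( \wnabla S=0 \) on suitable vectors and separates components); and you locate the main difficulty in the \( \Theta^{\zeta} \)-extraction, which in fact goes through much as in the paper — the real difficulty is the failure of the K\"ahler curvature identities described above.
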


\begin{proof}
  We decompose the curvature tensor field of \( (M,g,Q) \) as \(
  R=\nu_qR^0+R^{\sP^{\e}(n)} \) where \( \nu_q=\sca/(16n(n+2)) \) is
  constant (as the manifold is Einstein), \( R^0 \) is \eqref{R_0} and
  \( R^{\sP^{\e}(n)} \) is a curvature tensor field of type \(
  \sP^{\e}(n) \).  Recall that the space of algebraic curvature tensors
  \( \mathcal{R}^{\sP^{\e}(n)} \) is \( [S^4E] \) with \( E=\C^{2n} \)
  for \( \e=(-1,-1,-1) \), and \( S^4E \) with \( E=\R^{2n} \) for \(
  \epsilon=(-1,1,1) \).  Since \( R^0 \) is \(
  Sp^{\epsilon}(n)Sp^{\epsilon}(1) \)-invariant, the covariant
  derivative \( \nabla R^0 \) vanishes.  Moreover, for every vector
  field \( X \), \( S_X \) acts as an element of \(
  \sP^{\e}(n)+\sP^{\e}(1) \), whence \( S R^0=0 \).  Using the second
  equation of \eqref{AS+Omega} and \( \wnabla=\nabla-S \) we have that
  \begin{equation*}
    0=\wnabla R=\nu_q\wnabla R^0+\wnabla R^{\sP^{\e}(n)}=\nabla
    R^{\sP^{\e}(n)}-S R^{\sP^{\e}(n)}.
  \end{equation*}
  Writing \(
  T^*M\otimes(\sP^{\e}(n)+\sP^{\e}(1))=T^*M\otimes\sP^{\e}(n)+T^*M\otimes\sP^{\e}(1)
  \) we can decompose \( S=S_E+S_H \), and hence \( S_H
  R^{\sP^{\e}(n)}=0 \).  We thus obtain
  \begin{equation*}
    \nabla R=\nabla R^{\sP^{\e}(n)}=S_E R^{\sP^{\e}(n)},
  \end{equation*}
  which we can write as
  \begin{equation}
    \label{formula 2 AS}
    (\nabla_XR)_{YZWU} = -R^{\sP^{\e}(n)}_{S_XYZWU}
    -R^{\sP^{\e}(n)}_{YS_XZWU} -R^{\sP^{\e}(n)}_{YZS_XWU} -R^{\sP^{\e}(n)}_{YZWS_XU}.
  \end{equation}
  Taking the cyclic sum in \( X,Y,Z \) and applying Bianchi identities
  we obtain
  \begin{multline*}
    0 =
    \Cyclic_{\SXYZ}\Bigl\{2g(X,\xi)R^{\sP^{\e}(n)}_{YZWU}+g(X,W)R^{\sP^{\e}(n)}_{YZ\xi
      U}+g(X,U)R^{\sP^{\e}(n)}_{YZW\xi}\\
    +2\sum_a\e_a\bigr( g(X,J_aY)R^{\sP^{\e}(n)}_{J_a\xi ZWU}+
      g(X,J_aW)R^{\sP^{\e}(n)}_{YZJ_a\xi U}+
      g(X,J_aU)R^{\sP^{\e}(n)}_{YZWJ_a\xi}\bigr)\Bigr\}.
  \end{multline*}
  Contracting the previous formula with respect to \( X \) and \( W
  \), and taking into account that \( R^{\sP^{\e}(n)} \) is traceless
  we obtain
  \begin{equation*}
    (4n+2)R^{\sP^{\e}(n)}_{YZ\xi U}=0,
  \end{equation*}
  for every vector fields \( Z,Y,U \).  Expanding the expression of \(
  S \) in \eqref{formula 2 AS} and using the previous formula we
  arrive at
  \begin{equation*}
    0=\Cyclic_{\SXYZ}\theta(X)R^{\sP^{\e}(n)}_{YZWU},
  \end{equation*}
  where \( \theta=\xi^{\flat} \), or equivalently
  \begin{equation}\label{wedge 1}0=\theta\wedge
    R^{\sP^{\e}(n)}_{WU}.\end{equation}
  Noting that \( R^{\sP^{\e}(n)} \) satisfies the symmetries \(
  R^{\sP^{\e}(n)}_{XJ_aYWU}+R^{\sP^{\e}(n)}_{J_aXYWU}=0 \), \(
  a=1,2,3 \), we will also have \begin{equation}{\label{wedge
    2}}0=(\theta\circ J_a)\wedge R^{\sP^{\e}(n)}_{WU}=0,\qquad
    a=1,2,3.\end{equation} It is easy to prove that a curvature tensor
  of type \( \sP^{\e}(n) \) satisfying equations \eqref{wedge 1} and
  \eqref{wedge 2} must vanish.  Therefore we conclude that \(
  R=\nu_qR^0 \).

  Now, using the third equation in \eqref{AS+Omega} together with
  \eqref{struct linear type QT}, and taking into account \eqref{wnabla
  J} we have that
  \begin{align*}
    0 &=
    g(X,Y)\wnabla_Z\xi-g(\wnabla_Z\xi,Y)X-\sum_a\epsilon_a(g(\wnabla_Z\xi,J_aY)J_aX+g(X,J_aY)J_a\wnabla_Z\xi)\\
    &
    +\sum_ag(X,\wnabla_Z\zeta^a-\sum_b\widetilde{b}_{ba}(Z)\zeta^b)J_aY.
  \end{align*}
  Let \( \bH^{\e} \) denote the quaternions \( \bH \) or
  para-quaternions \( \pH \) depending on the corresponding value of
  \( \e \).  Taking \( X\in(\bH^{\e}\xi)^{\bot} \) with \( g(X,X)\neq 0
  \), and multiplying by \( X \) in the previous formula we obtain
  that
  \begin{equation}\label{wnabla xi}\wnabla_Z \xi=0.\end{equation}
  Whence
  \begin{equation}\label{wnabla
    zeta}\wnabla_Z\zeta^a=\sum_b\widetilde{b}_{ba}\zeta^b,\qquad
    a=1,2,3.\end{equation} From \eqref{wnabla xi} and \eqref{wnabla J}
  we compute
  \begin{align}\label{nabla Jxi}
    \nabla_XJ_a\xi & =
    \sum_b\widetilde{b}_{ab}(X)J_b\xi+g(X,J_a\xi)\xi\nonumber\\
    & -
    \sum_b\e_b(g(\xi,J_bJ_a\xi)J_bX-g(X,J_bJ_aX)J_b\xi)+\sum_bg(X,\zeta^b)J_bJ_a\xi.
  \end{align}
  On the other hand
  \begin{equation}\label{RXYxi}
    \begin{split}
      R_{XY}\xi & =
      -\nabla_X\nabla_Y\xi+\nabla_Y\nabla_X\xi+\nabla_{[X,Y]}\xi\\
      & = -g(Y,\nabla_X\xi)\xi -g(Y,\xi)\nabla_X\xi
      +g(X,\nabla_Y\xi)\xi
      +g(X,\xi)\nabla_Y\xi\\
      & -\sum_a\e_a\bigl(g(Y,\nabla_XJ_a\xi)J_a\xi +
      g(Y,J_a\xi)\nabla_XJ_a\xi \eqbreak[6] -g(X,\nabla_YJ_a\xi)J_a\xi
      - g(X,J_a\xi)\nabla_YJ_a\xi\bigr) \eqbreak +
      \sum_a-g(Y,\nabla_X\zeta^a)J_a\xi-g(Y,\zeta^a)\nabla_XJ_a\xi
      \eqbreak +g(X,\nabla_Y\zeta^a)J_a\xi+g(X,\zeta^a)\nabla_YJ_a\xi.
    \end{split}
  \end{equation}
  Taking \( X,Y\in(\bH^{\e}\xi)^{\bot} \), we have \( g(R_{XY}\xi,X)=0
  \) from \( R=\nu_qR^0 \) on the one hand, and
  \begin{equation*}
    g(R_{XY}\xi,X)=\sum_ag(X,\zeta^a)g(\xi,\xi)g(J_aY,X)
  \end{equation*}
  from \eqref{RXYxi} on the other.  Moreover, for \( Y=J_bX \) it
  reduces to
  \begin{equation*}
    g(R_{XJ_bX}\xi,X)=-\epsilon_bg(\xi,\xi)g(X,\zeta^b)g(X,X).
  \end{equation*}
  This implies that \( g(X,\zeta^b)=0 \), so that
  \begin{equation*}
    \zeta^b\in\bH^{\e}\xi,\qquad b=1,2,3.
  \end{equation*}
  Recalling \eqref{wnabla xi} we have that \( g(\xi,\nabla_Y\xi)=0
  \).  Applying this and \eqref{nabla J} to \eqref{RXYxi} with \( X=\xi
  \) and \( Y\in(\bH^{\e}\xi)^{\bot} \) we obtain
  \begin{equation*}
    g(Y,\nabla_YJ_a\xi)=0,\qquad g(Y,\nabla_Y\zeta^a)=0,\qquad
    g(\xi,\nabla_Y\zeta^a)=0,
  \end{equation*}
  \begin{equation*}
    g(Y,\nabla_{\xi}J_a\xi)=g(Y,J_a\nabla_{\xi}\xi)+\sum_bg(Y,b_{ab}(\xi)J_b\xi)=0,
  \end{equation*}
  \begin{equation*}
    g(Y,\nabla_YJ_a\xi)=g(\xi,J_a\nabla_Y\xi)+\sum_bg(\xi,b_{ab}J_b\xi)=0.
  \end{equation*}
  Hence
  \begin{align*}
    R_{\xi Y}\xi & =
    g(\xi,\xi)\nabla_Y\xi+\sum_ag(\xi,\zeta^a)\nabla_YJ_a\xi\\
    & =
    -g(\xi,\xi)^2Y-\sum_ag(\xi,\zeta^a)\sum_b\e_bg(J_bJ_a\xi,\xi)J_bY\\
    & = -g(\xi,\xi)^2Y-\sum_ag(\xi,\zeta^a)g(\xi,\xi)J_aY.
  \end{align*}
  Comparing with \( R_{\xi Y}\xi=\nu_q(R^0)_{\xi
  Y}\xi=\nu_qg(\xi,\xi)Y \) we deduce that \( \nu_q=-g(\xi,\xi) \) and
  \( g(\xi,\zeta^a)=0 \).  Finally we take again \(
  X,Y\in(\bH^{\e}\xi)^{\bot} \) in \eqref{RXYxi} obtaining
  \begin{align*}
    R_{XY}\xi & =-g(Y,\nabla_X\xi)\xi+g(X,\nabla_Y\xi)\xi\\
    &
    -\sum_a\e_a(g(Y,\nabla_XJ_a\xi)J_a\xi-g(X,\nabla_YJ_a\xi)J_a\xi)\\
    &+\sum_a-g(Y,\nabla_X\zeta^a)J_a\xi+g(X,\nabla_Y\zeta^a)J_a\xi.
  \end{align*}
  Taking into account \eqref{wnabla zeta}, the previous formula reads
  \begin{equation*}
    R_{XY}\xi  =
    2\sum_a\e_ag(Y,J_aX)g(\xi,\xi)J_a\xi+2\sum_{a,b}\e_bg(Y,J_bX)g(\xi,J_b\zeta^a)J_a\xi,
  \end{equation*}
  and comparing with
  \begin{equation*}
    R_{XY}\xi=\nu_q(R^0)_{XY}\xi=-2\sum_a\e_ag(\xi,\xi)g(X,J_aY)J_a\xi
  \end{equation*}
  we have
  \begin{equation*}
    g(J_b\zeta^a,\xi)=0,\qquad a,b=1,2,3.
  \end{equation*}
  This in conjunction with \( \zeta^a\in(\bH^{\e}\xi)^{\bot} \) and \(
  g(\zeta^a,\xi)=0 \) gives
  \begin{equation*}
    \zeta^a=0,\qquad a=1,2,3.
  \end{equation*}
\end{proof}

\begin{remark}
  For \( \e=(-1,-1,-1) \), let \( (M,g,Q) \) be a connected,
  simply-connected and complete pseudo-quaternion K\"ahler manifold
  admitting a non-degenerate homogeneous pseudo-quaternion K\"ahler
  structure of linear type given by the vector field \( \xi \).  Since
  it has constant pseudo-quaternion sectional curvature \(
  c=-4g(\xi,\xi) \).  If \( g(\xi,\xi)>0 \) then \( c<0 \), so \(
  M=\HH^n_s \) for some \( s=0,\dots,n-1 \); if \( g(\xi,\xi)<0 \)
  then \( c>0 \), so \( M=\HP^n_s \) for some \( s=1,\dots,n
  \).
\end{remark}

\section{Homogeneous models and completeness}
\label{sec:homog-models-compl}

Let \( (M,g,J) \) (respectively \( (M,g,Q) \)) be a connected \( \e
\)-K\"ahler (\( \e \)-quaternion K\"ahler) manifold admitting a
non-degenerate homogeneous \( \e \)-K\"ahler (\( \e \)-quaternion
K\"ahler) structure of linear type \( S \).  By
Proposition~\ref{prop:hol-c} such spaces are now locally isometric to
one of the model spaces \eqref{complex proj
hyperb}--\eqref{para-complex proj} (or \eqref{symmet pseudo-quaternion
proj hyperb}--\eqref{symmet para-quat Kahler}).  We can construct a
Lie algebra \( \f{g} \) using the so called \textit{Nomizu
construction} (see \cite{TV}) in the following way:
\begin{equation*}
  \f{g}=T_pM\oplus \f{hol}^{\wnabla},
\end{equation*}
where \( p\in M \) is a fixed point, \( \wnabla=\nabla-S \), and
the brackets are given by
\begin{equation}
  \left\{
    \begin{aligned}
      \left[A,B\right] & =  AB-BA, &&
        A,B\in \f{hol}^{\widetilde{\nabla}},\\
      \left[A,\eta\right] & =  A\cdot\eta, &&
        A\in \f{hol}^{\widetilde{\nabla}},\eta\in T_pM,\\
      \left[\eta,\zeta\right] & =
        S_{\eta}\zeta-S_{\zeta}\eta+\widetilde{R}_{\eta\zeta}, &&
      \eta,\zeta\in T_pM.
    \end{aligned}
  \right.
\end{equation}
Here \( \widetilde{R} \) stands for the curvature tensor of \( \wnabla \),
that is \( \widetilde{R}=R-R^S \) with the convention
\begin{gather*}
  R_{XY}Z=\nabla_{[X,Y]}Z-\nabla_X\nabla_YZ+\nabla_Y\nabla_XZ,\\
  R^S_{XY}Z=S_{S_XY-S_YX}Z-S_XS_YZ+S_YS_XZ.
\end{gather*}
If the manifold is simply-connected and complete, then it is reductive
homogeneous, and \( \f{g} \) is the Lie algebra of a group acting
transitively on \( M \) and preserving the \( \e \)-K\"ahler or \( \e
\)-quaternion K\"ahler structure.

Using this construction we shall prove the following results.

\begin{theorem}\label{thm:exists}
  The indefinite \( \e \)-complex space forms \( \C P^n_s \), \( \C
  H^n_s \) and \( \pC P^n \) locally admit non-degenerate homogeneous
  \( \e \)-K\"ahler structure of linear type.  Similarly, the
  indefinite \( \e \)-quaternion space forms \( \HP^n_s \), \( \HH^n_s
  \) and \( \pHP^n \) locally admit non-degenerate homogeneous \( \e
  \)-quaternion K\"ahler structure of linear type.
\end{theorem}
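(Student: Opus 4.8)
The strategy is to read the two main theorems as an existence recipe. By Theorem~\ref{Teorema principal complex} (resp. Theorem~\ref{Teorema principal quat}) a non-degenerate homogeneous structure of linear type on any manifold carrying one is forced to have $\zeta=0$ (resp. $\zeta^a=0$) and $c=-4g(\xi,\xi)$. Hence, to prove that each model of Proposition~\ref{prop:hol-c} \emph{admits} such a structure locally, we may — indeed must — look for $S$ of the form \eqref{e-Kahler structure} with $\zeta=0$ (resp. \eqref{struct linear type QT} with $\zeta^a=0$), so that $S$ is determined by a single vector field $\xi$. By the criterion of \cite{BGO} recalled after \eqref{e-Kahler structure} (and its $\e$-quaternion analogue), such an $S$ is a homogeneous structure exactly when $\wnabla R=0$ and $\wnabla\xi=0$, where $\wnabla=\nabla-S$: the conditions $\wnabla g=0$, $\wnabla J=0$ (resp. $\wnabla\Omega=0$) and $\wnabla S=0$ hold automatically from the algebraic shape of $S$, since $S_X$ commutes with $J$ (resp. lies in $\sP^{\e}(n)+\sP^{\e}(1)$) and $S$ is built algebraically from $\xi$, $g$, $J$, so that $\wnabla S=0$ reduces to $\wnabla\xi=0$.

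First I would dispose of the curvature equation. Each model of Proposition~\ref{prop:hol-c} is symmetric, so $\nabla R=0$; moreover $R=\frac{c}{4}R^0$ with $R^0$ as in \eqref{R_0}, and for every $X$ the endomorphism $S_X$ lies in the structure algebra $\un(r,s)$ or $\gl(n,\R)$ (resp. $\sP^{\e}(n)+\sP^{\e}(1)$), under which $R^0$ is invariant. Hence $S\cdot R=\frac{c}{4}\,S\cdot R^0=0$, and $\wnabla R=\nabla R-S\cdot R=0$ with no further work. The whole problem therefore collapses to the single first-order equation $\wnabla\xi=0$, i.e. $\nabla_X\xi=S_X\xi$. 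Expanding the right-hand side and using $g(\xi,J\xi)=0$, this reads $\nabla_X\xi=g(X,\xi)\xi-g(\xi,\xi)X+\e\,g(X,J\xi)J\xi$ in the $\e$-K\"ahler case, an explicit quadratic function of $\xi$; the $\e$-quaternion case is identical with the three terms $\sum_a\e_a g(X,J_a\xi)J_a\xi$.

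Because $\nabla\xi$ is thereby prescribed completely in terms of $\xi$, this is a first-order system with the full covariant derivative specified, and the standard integrability theory yields a local solution through any point $p$ with any prescribed initial value $\xi_p$ provided the curvature identity $\nabla_X\nabla_Y\xi-\nabla_Y\nabla_X\xi-\nabla_{[X,Y]}\xi=-R_{XY}\xi$ holds identically. But computing the left-hand side from the prescribed $\nabla\xi$ is precisely the calculation run in the proofs of Theorems~\ref{Teorema principal complex} and~\ref{Teorema principal quat}, read in reverse: it produces $-g(\xi,\xi)R^0_{XY}\xi$ (see \eqref{formula5} with $\zeta=0$, resp. the computation leading to \eqref{RXYxi}), which matches $R_{XY}\xi=\frac{c}{4}R^0_{XY}\xi$ exactly when $\frac{c}{4}=-g(\xi,\xi)$, i.e. when $c=-4g(\xi,\xi)$ (resp. $\nu_q=-g(\xi,\xi)$) — the very normalization of the model. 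Choosing $\xi_p$ non-null with $g(\xi_p,\xi_p)=-c/4$ then gives a non-degenerate $S$ on a neighbourhood. Equivalently, one assembles the verified Ambrose--Singer--Kiri\v{c}enko data into the Lie algebra $\fg=T_pM\oplus\f{hol}^{\wnabla}$ of the Nomizu construction and integrates it to a group acting transitively on a neighbourhood.

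The main obstacle is exactly this integrability (equivalently the Jacobi identity for $\fg$, equivalently $\wnabla S=0$). It is essentially book-keeping, but in the $\e$-quaternion case it is delicate: the frame $\{J_a\}$ is only local and is rotated by the $\sP^{\e}(1)$-valued connection form $(\widetilde b_{ij})$ of \eqref{wnabla J}, so one must check that the three terms of \eqref{struct linear type QT} and the auxiliary $\zeta^a$ (ultimately forced to vanish) transform consistently, which is the content of $\wnabla\Omega=0$ replacing $\wnabla J=0$. Finally, the word \emph{locally} in the statement is essential: since the prescribed $\nabla\xi$ grows quadratically in $\xi$, the parallel field $\xi$, and with it the transitive group action, need not extend to the whole simply-connected model — a failure that is precisely the incompleteness phenomenon studied in the remainder of this section.
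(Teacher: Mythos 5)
Your strategy is genuinely different from the paper's. The paper never solves a PDE on the model space: it builds the infinitesimal model algebraically (taking $S$ of the form \eqref{e-Kahler structure}, resp.\ \eqref{struct linear type QT}, with $\zeta=0$ and $R=-g(\xi,\xi)R^0$), computes $\widetilde R=R-R^S$ and the Nomizu brackets on $\f{g}=T_pM\oplus\f{hol}^{\wnabla}$, identifies $\f{g}$ explicitly as a subalgebra of $\f{isom}$ of each model in Proposition~\ref{prop:hol-c} with $\f{h}=\f{g}\cap\f{isot}$ closed, and realises $G/H$ as an open orbit in $\Isom/\Isot$, which carries the structure by construction. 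Your reductions, by contrast, are analytic and are essentially correct: $\wnabla g=0$ and $\wnabla J=0$ (resp.\ $\wnabla\Omega=0$) do follow from the algebraic shape of $S$; $\wnabla R=\nabla R-S\cdot R=0$ because the model is symmetric and $S_X$ lies in the stabiliser algebra of $R^0$; and $\wnabla S=0$ does collapse to $\wnabla\xi=0$ (modulo the $Sp^{\e}(1)$-equivariance book-keeping you flag). So the whole problem indeed reduces to solving $\nabla_X\xi=S_X\xi$ locally.

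The gap is in the step where you solve that system. The integrability theorem you invoke requires the curvature identity to hold \emph{identically}, i.e.\ at all values of the unknown near $(p,\xi_p)$ in $TM$: the relevant object is the distribution $\mathcal D_{(x,v)}=\{X^h+(S_Xv)^v\}$ on the total space of $TM$, and its Frobenius condition at $(x,v)$ equates $R_{XY}v$ with the commutator of the prescribed derivatives evaluated at $v$. As you yourself observe, that commutator is \emph{cubic} in $v$ while $R_{XY}v=\frac{c}{4}R^0_{XY}v$ is linear, so the two agree only on the codimension-one quadric $\{g(v,v)=-c/4\}$ in each fibre. Choosing the initial value $\xi_p$ on that quadric does not licence the theorem you quoted: its hypothesis fails on every neighbourhood of $(p,\xi_p)$ in $TM$. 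What is missing is the observation that the quadric bundle $N=\{(x,v):g_x(v,v)=-c/4\}$ is an invariant constraint for the system: since $S_X$ is skew-symmetric, $g(S_Xv,v)=0$, and since $g$ is parallel the horizontal lifts annihilate $g(v,v)$; hence $\mathcal D$ is tangent to $N$, and one may restrict $\mathcal D$ to $N$, where the integrability condition now holds at \emph{every} point. Frobenius applied on $N$ then gives a leaf through $(p,\xi_p)$, transverse to the fibres, whose local graph is the desired field $\xi$. With that one observation (and noting that you need the identity $R_{XY}\xi=-g(\xi,\xi)R^0_{XY}\xi$ for \emph{all} $X,Y$, whereas \eqref{formula5} records only the combination $R_{XJX}\xi$), your argument closes and yields a correct, more elementary proof of Theorem~\ref{thm:exists}; note, however, that unlike the paper's construction it does not produce the explicit groups $G/H$ that Section~\ref{sec:homog-models-compl} needs for the incompleteness argument of Theorem~\ref{thm:incomplete}.
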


\begin{theorem}\label{thm:incomplete}
  Let \( (M,g,J) \) be a connected and simply-connected \( \e
  \)-K\"ahler manifold with \( \dim M\geqslant 4 \), admitting a
  non-degenerate homogeneous \( \e \)-K\"ahler structure of linear
  type.  If \( g \) is not definite then \( (M,g,J) \) is not complete.

  Let \( (M,g,Q) \) be a connected and simply-connected \( \e
  \)-quaternion K\"ahler manifold with \( \dim M\geqslant 8 \),
  admitting a non-degenerate homogeneous \( \e \)-quaternion K\"ahler
  structure of linear type.  If \( g \) is not definite then \(
  (M,g,Q) \) is not complete.
\end{theorem}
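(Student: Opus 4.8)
The plan is to argue by contradiction: assume $(M,g,J)$ is complete. Being connected and simply-connected, Theorem~\ref{Teorema principal complex} applies and tells us that $(M,g,J)$ has constant $\e$-holomorphic sectional curvature $c=-4g(\xi,\xi)\neq 0$ and that the auxiliary field vanishes, $\zeta=0$. The strategy is then to show that the globally defined, non-null vector field $\xi$ forces $g$ into an explicit warped (horospherical) normal form covering all of $M$, and that this form carries a geodesic of finite affine length whenever $g$ is indefinite, contradicting completeness.

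First I would read off the structure of $\xi$. Since $\zeta=0$, the condition $\wnabla\xi=0$ gives $\nabla_X\xi=S_X\xi=g(X,\xi)\xi-g(\xi,\xi)X+\e g(X,J\xi)J\xi$, using $g(\xi,J\xi)=0$. The right-hand side shows that $g(\nabla_X\xi,Y)$ is symmetric in $X,Y$, so $\xi^{\flat}$ is closed; as $M$ is simply-connected we may write $\xi^{\flat}=dh$. Differentiating $g(\xi,\xi)$ gives $X(g(\xi,\xi))=2g(\nabla_X\xi,\xi)=0$, so $\lambda:=g(\xi,\xi)$ is a non-zero constant, and $\nabla_\xi\xi=0$, so the flow lines of $\xi=\nabla h$ are geodesics. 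Completeness makes these geodesics defined for all time, and along them $h$ is affine with slope $\lambda\neq0$; hence $h$ is surjective and its gradient flow produces a global splitting $M\cong\Sigma\times\R_t$, with $\Sigma$ a level set.

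Next I would extract the second fundamental form of the level sets $\{h=\mathrm{const}\}$ from the formula for $\nabla_X\xi$. Using $g(J\xi,J\xi)=-\e\lambda$ one finds $\nabla_X\xi=-\lambda X$ for $X\in\Span\{\xi,J\xi\}^{\perp}$ and $\nabla_{J\xi}\xi=-2\lambda J\xi$; that is, two distinct ``principal curvatures'', $-\lambda$ off $\Span\{\xi,J\xi\}$ and $-2\lambda$ along $J\xi$. Integrating the flow, $g$ acquires the multiply-warped solvable/horospherical form, whose relevant sub-block is the genuine warped product $\lambda\,dt^2+e^{-2\lambda t}g_1$ on $\R_t\times\Span\{\xi,J\xi\}^{\perp}$ with $g_1$ flat; this is exactly the homogeneous model delivered by the Nomizu construction $\fg=T_pM\oplus\f{hol}^{\wnabla}$.

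Finally I would exhibit the incomplete geodesic. Because $\xi$ and $J\xi$ are non-null, the indefiniteness of $g$ must be carried by the warped fibre $g_1$, so there is a fibre vector $p$ whose warped contribution has sign opposite to that of $\lambda\,dt^2$, i.e.\ $g_1(p,p)/\lambda<0$. The geodesic equations integrate to $e^{-2\lambda t}\dot y=p$ along the fibre and to the first integral $\lambda\dot t^2+e^{2\lambda t}g_1(p,p)=E$; taking the null geodesic $E=0$ gives $\dot t=\pm\sqrt{-g_1(p,p)/\lambda}\,e^{\lambda t}$, so $\int e^{-\lambda t}\,dt$ converges as $t$ runs to the appropriate infinity. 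This geodesic leaves every compact set at a finite value of its affine parameter and is inextensible, contradicting completeness; hence $(M,g,J)$ is not complete. In the definite case $g_1$ has the sign of $\lambda$, there are no such null geodesics, and the first integral forces turning points, so the model stays complete --- which is precisely why the hypothesis that $g$ is not definite is needed. The $\e$-quaternion K\"ahler statement follows the same scheme via Theorem~\ref{Teorema principal quat} (giving constant $\e$-quaternion sectional curvature and $\zeta^a=0$, $a=1,2,3$), now with the curvature $-2\lambda$ along each of $J_1\xi,J_2\xi,J_3\xi$ and the same escaping null geodesic. I expect the \emph{main obstacle} to be the passage from the infinitesimal data to the explicit global warped normal form for $g$, together with the rigorous verification that the chosen null geodesic genuinely leaves $M$; the remaining steps are bookkeeping with the structure equations.
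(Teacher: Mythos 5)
Your strategy (contradiction via an explicitly incomplete geodesic, using only the global vector field $\xi$) is genuinely different from the paper's, which instead realises $M$ as the Nomizu model $G/H$, embeds $\f{g}=T_pM\oplus\f{hol}^{\wnabla}$ into the isometry algebra of the model space, and descends through fixed-point sets of involutions to the incomplete solvable surface $K$ of Lemma~\ref{lemma completeness K}. Your preliminary steps are correct: with $\zeta=0$ one has $\nabla_X\xi=g(X,\xi)\xi-\lambda X+\e g(X,J\xi)J\xi$, $\lambda=g(\xi,\xi)$ is constant, $\xi^{\flat}$ is exact, the flow of $\xi$ splits $M\cong\R\times\Sigma$, and the two contraction rates $-\lambda$, $-2\lambda$ are right. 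The gap is the normal form itself, and it is not merely unproved but false: (i) the distribution $\R\xi\oplus\Span\{\xi,J\xi\}^{\bot}$ is not integrable --- for sections $X,Y$ of it one computes $g([X,Y],J\xi)=2\lambda g(JX,Y)$, nonzero already for $Y=JX$ --- so the ``relevant sub-block'' $\R_t\times\Span\{\xi,J\xi\}^{\bot}$ is not a submanifold and has no geodesic equations of its own; (ii) the level sets of $h$ are Heisenberg-type hypersurfaces (horospheres), which are not flat, and the $J\xi$-direction warps at the different rate $e^{-4\lambda t}$. A concrete check: in the Riemannian case $\CH^n$, $n\geqslant 2$, where the structure does exist globally (Theorem~\ref{thm:exists}, \cite{CGS1}), every one of your steps up to the splitting holds, yet the level sets are non-flat and the distribution is the non-integrable contact distribution; so the warped form with flat fibre cannot follow from those steps. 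Since the conservation laws $e^{-2\lambda t}\dot y=p$ and the energy first integral --- exactly what produce your finite-affine-length geodesic --- are consequences of this false structure, the proof breaks at its central point. You flagged this passage as ``the main obstacle''; it is not a verification issue but a wrong statement.

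The good news is that your idea survives if you abandon the normal form and run ODEs along a single geodesic. If $g$ is indefinite then (for $\e=-1$ the plane $\Span\{\xi,J\xi\}$ is definite; for $\e=1$ the metric is automatically neutral) there is $X\perp\{\xi,J\xi\}$ with $\lambda g(X,X)<0$, hence a null vector $v=\xi+tX$ with $g(v,J\xi)=0$, $g(v,\xi)=\lambda$. Along the --- by the completeness assumption globally defined --- null geodesic $\gamma$ with $\dot\gamma(0)=v$, put $u=g(\dot\gamma,\xi)$, $w=g(\dot\gamma,J\xi)$; the displayed formula for $\nabla\xi$ and $\nabla J=0$ give $w'=2uw$ and $u'=u^2+\e w^2$. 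Uniqueness forces $w\equiv 0$, hence $u'=u^2$ with $u(0)=\lambda\neq 0$, which blows up at parameter $1/\lambda$ although $u$ is smooth on all of $\R$ --- the desired contradiction. The quaternionic statement goes the same way with $w_a=g(\dot\gamma,J_a\xi)$ and $X\in(\bH^{\e}\xi)^{\bot}$: the extra terms from $\nabla J_a=\sum_j b_{aj}J_j$ are again linear in the $w_a$'s, so $w_a\equiv 0$ persists and $u'=u^2$. This replaces both your warped-product step and your ``verification that the geodesic leaves $M$'', and needs neither integrability nor flatness of any fibre.
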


\begin{proof}[Procedure for the proof of Theorems \ref{thm:exists}
  and~\ref{thm:incomplete}] This is the general procedure that will be
  specialised to the cases para-K\"ahler, pseudo-K\"ahler,
  para-quateternion K\"ahler, and pseudo-quaternion K\"ahler later.
  Recall the model spaces in Proposition~\ref{prop:hol-c} and write
  each of these as~\( \Isom/\Isot \).  By Remark~\ref{remark diffeo
  CP-CH and HP-HH}, will need only consider the four spaces \( \pCP^n
  \), \( \CH^n_s \), \( \pHH^n \) or \( \HH^n_s \).  By Theorems
  \ref{Teorema principal complex} and~\ref{Teorema principal quat} our
  spaces of linear type are locally \( \pm \)-isometric to one of
  these models.

  The first step is to explicitly compute the Lie algebra
  \( \f{g} = T_pM \oplus \f{hol}^{\wnabla} \) associated with an \( \e
  \)-K\"ahler or \( \e \)-quaternion K\"ahler manifold that is
  homogeneous of linear type with tensor field~\( S \).  This is done by obtaining the
  expression for \( \widetilde{R} = R-R^S \) via Theorems \ref{Teorema
  principal complex} and \ref{Teorema principal quat}.  Now we
  identify \( \f{g} \) with a subalgebra of \( \f{isom} \), in such a
  way that \( \f{hol}^{\wnabla} \) is the intersection of \( \f{g} \)
  and \( \f{isot} \).  This gives subgroups \( G\subset \Isom \) and
  \( H\subset \Isot \), and we find that \( H \) is closed in~\( G \).
  The infinitesimal model \( (\f{g},\f{hol}^{\wnabla}) \) associated
  to~\( S \) is thus regular, so may be realised on the homogeneous
  space \( G/H \).  Now the orbit of \( p = e\Isot \) in the model
  space \( \Isom/\Isot \) is just~\( G/H \).  Counting dimensions one
  sees that \( G/H \) is an open subset of~\( \Isom/\Isot \).  Since
  by construction \( G/H \) admits a non-degenerate homogeneous \( \e
  \)-K\"ahler or \( \e \)-quaternion K\"ahler structure of linear
  type, this would prove Theorem~\ref{thm:exists}.

  Now, \( (M,g) \) is locally isometric to the homogeneous space \(
  G/H \) (see \cite{Tri}) and when \( (M,g) \) is simply-connected and
  complete, so it will be globally isometric to~\( G/H \).  To prove
  Theorem~\ref{thm:incomplete} we show that \( G/H \) is not complete.
  By passing to covers, we may take \( G \) to be simply connected.
  Writing \( \f{h} = \f{hol}^{\wnabla} \), we consider a Lie algebra
  involution \( \sigma\colon \f{g} \to \f{g} \) with \(
  \sigma(\f{h})\subset\f{h} \) and restricting to an isometry for
  the \( \Ad(H) \)-invariant metric on~\( T_pM \).  The map \(
  \sigma \) determines a Lie group involution \( \sigma\colon G\to
  G \) with \( \sigma(H)\subset H \), and an involution~\( \sigma
  \) on the homogeneous space \( G/H \).  Denote the fixed-point set
  of \( \sigma \) on \( X \) by \( X^{\sigma} \).  Then the
  homogeneous spaces \( G^{\sigma}/H^{\sigma} \) and \(
  (G/H)^{\sigma} \) are isometric.  However, \( \sigma \) is an
  isometry, so \( (G/H)^{\sigma} \) is a totally geodesic
  submanifold of~\( G/H \).

  By considering a sequence of such Lie algebra involutions, we can
  construct a chain of totally geodesic submanifolds
  \begin{equation*}
    \dotsb\subset ((G/H)^{\sigma_1})^{\sigma_2}\subset (G/H)^{\sigma_1}\subset G/H.
  \end{equation*}
  In our cases, we use this technique to construct a totally geodesic
  submanifold that we can show is not complete, Lemma~\ref{lemma
  completeness K}.  It follows that \( G/H \) is not geodesically
  complete.
\end{proof}

\begin{lemma}
  \label{lemma completeness K}
  The Lie group \( K \) with Lie algebra \( \f{k}=\Span\{A,V\} \), \(
  [A,V]=V \), and left invariant metric given by
  \begin{equation*}
    g(A,A)=1,\qquad g(V,V)=-1,\qquad g(A,V)=0,
  \end{equation*}
  is not geodesically complete, time-like complete, null complete nor
  space-like complete.
\end{lemma}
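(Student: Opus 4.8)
The plan is to reduce everything to an explicit ODE for the geodesics written in the left-invariant frame $\{A,V\}$. First I would compute the Levi-Civita connection of the left-invariant metric using the Koszul formula, which for left-invariant fields collapses to $2g(\nabla_X Y,Z)=g([X,Y],Z)-g([Y,Z],X)+g([Z,X],Y)$ since all frame inner products are constant. With $[A,V]=V$ and $g(A,A)=1$, $g(V,V)=-1$, $g(A,V)=0$ a short computation gives $\nabla_A A=\nabla_A V=0$, $\nabla_V A=-V$ and $\nabla_V V=-A$ (the value of $\nabla_V A$ is also forced by torsion-freeness, $\nabla_A V-\nabla_V A=[A,V]=V$).

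Next I would write a geodesic $\gamma$ as $\dot\gamma=a(t)A+v(t)V$ in this frame and substitute into $\nabla_{\dot\gamma}\dot\gamma=0$. Using the connection above this reduces to the pair $\dot a=v^2$, $\dot v=av$. The quantity $E:=g(\dot\gamma,\dot\gamma)=a^2-v^2$ is constant along $\gamma$ (directly, $\tfrac{d}{dt}(a^2-v^2)=2av^2-2av^2=0$), and its sign is exactly the causal character of $\gamma$: spacelike for $E>0$, null for $E=0$, timelike for $E<0$. Eliminating $v$ via $v^2=a^2-E$ turns the system into a single Riccati equation $\dot a=a^2-E$.

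The core of the argument is then that for each causal type one can choose initial data whose Riccati solution blows up in finite parameter time. For $E<0$ we have $\dot a=a^2+|E|\geq|E|>0$, so $a$ increases and eventually satisfies $\dot a\geq a^2$, forcing blow-up in finite time (indeed $a(t)=\sqrt{|E|}\tan(\sqrt{|E|}(t-t_0))$); for $E=0$ the equation $\dot a=a^2$ has solution $a(t)=(1/a_0-t)^{-1}$, escaping at $t=1/a_0$ for any genuine null direction $a_0=\pm v_0\neq0$; and for $E>0$ any admissible datum with $a_0>\sqrt{E}$ (equivalently $v_0\neq0$, $a_0>0$) gives $\dot a=a^2-E>0$ increasing, again forcing finite-time blow-up. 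Since $v^2=a^2-E$, the component $v$ blows up together with $a$. Thus each of the three causal classes contains a geodesic whose frame components escape to infinity in finite affine time, and \emph{a fortiori} $K$ is geodesically incomplete.

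The step I expect to be the main obstacle is converting ``the frame components $a,v$ blow up at a finite parameter $T$'' into genuine geodesic incompleteness. The clean way is to observe that $a(t)=g(\dot\gamma(t),A)$ and $v(t)=-g(\dot\gamma(t),V)$ along $\gamma$; were the maximal geodesic to extend to the finite value $t=T$, the velocity $\dot\gamma(T)$ would be a well-defined tangent vector and hence $a(T),v(T)$ finite, contradicting the blow-up. Equivalently, identifying $K$ with the half-plane $\{y>0\}$ carrying $g=(dy^2-dx^2)/y^2$ (so that $A=y\partial_y$, $V=y\partial_x$), one can reconstruct $\gamma$ from $\dot y=ay$, $\dot x=vy$ and check directly that $y\to\infty$ as $t\to T$, i.e.\ the geodesic leaves every compact set in finite affine time. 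Either route shows the maximal interval of definition is bounded on one side, so $K$ is not complete for any causal type.
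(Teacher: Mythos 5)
Your proof is correct and is essentially the paper's own argument: the same Levi-Civita connection, the same left-invariant frame ODE $\dot a = v^2$, $\dot v = av$, and finite-time blow-up of solutions with initial data of each causal sign (the paper simply writes down the explicit solutions $\tan$, $1/(1-t)$, $\coth$, while you organize the cases via the first integral $E=a^2-v^2$ and the resulting Riccati equation). Note that your causal labels are opposite to the paper's convention (it calls $E>0$ time-like and $E<0$ space-like), but since you exhibit incomplete geodesics for every sign of $E$ the conclusion is unaffected; your closing justification that blow-up of the frame components forces geodesic incompleteness is a detail the paper leaves implicit.
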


\begin{proof}
  The Levi-Civita connection of this metric is
  \begin{equation*}
    \nabla_{A}A=0,\qquad \nabla_{A}V=0,\qquad \nabla_VA=-V,\qquad
    \nabla_VV=-A.
  \end{equation*}
  Let \( \gamma \) be a curve in \( K \) and \( \dot{\gamma} \) its
  derivative.  We write \( \dot{\gamma}(t)=\gamma_1(t)A+\gamma_2(t)V
  \).  The geodesic equation thus implies
  \begin{equation*}
    \left\{
      \begin{aligned}
        &\dot{\gamma}_1-\gamma_2^2=0\\
        &\dot{\gamma}_2-\gamma_1\gamma_2=0.
      \end{aligned}
    \right.
  \end{equation*}
  The solution to this system with space-like initial value \(
  \gamma_1(0) = 0 \), \( \gamma_2(0) = 1 \) is \( \gamma_1(t) =
  \tan(t) \), \( \gamma_2 = 1/\cos(t) \) which is defined for \(
  -\pi/2<t<\pi/2 \).  On the other hand, the null initial value \(
  \gamma_1(0) = 1 = \gamma_2(0) \), has solution \( \gamma_1(t) =
  \gamma_2(t) = 1/(1-t) \) which is only defined for \( t<1 \).
  Finally, the time-like initial value \( \gamma_1(0) = 1 \), \(
  \gamma_2(0) = r \), \( 0<r<1 \), has \( x(t) = s \coth(st+k) \), \(
  y(t) = s /\sinh(st+k) \), where \( s = \sqrt{1-r^2} \), \( \tanh k =
  s \).  These solutions are only defined for \( t \ne -k/s \).
\end{proof}

We now prove Theorems \ref{thm:exists} and~\ref{thm:incomplete} for
the para-K\"ahler, pseudo-K\"ahler, para-quaternion K\"ahler and
pseudo-quaternion K\"ahler cases.  Due to differences, we treat them
separately.

\subsection{Para-K\"ahler case}\label{subsection
para-Kahler case}

During this subsection \( \pC \) denotes the set of para-complex
numbers, \( e \) stands for the imaginary
para-complex unit, so \( e^2=+1 \), and \( \bar{z} \) denotes the
para-complex conjugation of \( z\in \pC \).

We first compute the infinitesimal model \( (\f{g},\f{hol}^{\wnabla})
\).  Using formula \eqref{e-Kahler structure} with \( \zeta=0 \) and \(
\e=1 \), we obtain by direct calculation
\begin{equation*}
  \begin{split}
    R^S_{XY}Z & = g(\xi,\xi)\left\{g(Y,Z)X-g(X,Z)Y+ g(Y,JZ)JX-
      g(X,JZ)JY\right\} \eqbreak
     -2 g(X,JY)\left\{g(\xi,JZ)\xi+g(\xi,Z)J\xi\right\},
  \end{split}
\end{equation*}
and since \( (M,g,J) \) has constant para-holomorphic sectional
curvature we have
\begin{equation*}
  \widetilde{R}_{XY}Z=-2
  g(X,JY)\left\{g(\xi,\xi)JZ-g(\xi,JZ)\xi-g(\xi,Z)J\xi\right\}.
\end{equation*}
Now, \( \widetilde{R}_{XY}\xi=0 \) and thus \( \widetilde{R}_{XY}
\) acts trivially on \( \R^2=\Span\{\xi,J\xi\} \).  On the other
hand for \( Z\in\Span\{\xi,J\xi\}^{\bot} \), one has
\begin{equation*}
  \widetilde{R}_{XY}Z=-2g(X,JY)g(\xi,\xi)JZ,
\end{equation*}
so that \( \widetilde{R}_{XY} \) acts on \( U=\Span\{\xi,J\xi\}^{\bot}
\) as \( -2 g(X,JY)g(\xi,\xi)J \).  We conclude that \(
\f{hol}^{\wnabla} \) is one dimensional and is generated by the
element \( \JJ=\frac{1}{2g(\xi,\xi)^2}\widetilde{R}_{\xi J\xi} \).
The remaining brackets are
\begin{equation}\label{corchetes paraC}
  \begin{aligned}
    [Z_1,Z_2]&=2 g(Z_1,JZ_2)L_0, & [\xi,J\xi]&=2g(\xi,\xi)L_0,\\
    [\xi,Z]&=g(\xi,\xi)JZ & [J\xi,Z]&=g(\xi,\xi)JZ,
  \end{aligned}
\end{equation}
where \( Z_1,Z_2,Z\in U \) and \( L_0=J\xi-g(\xi,\xi)\JJ \).
The Lie algebra given by the Nomizu
construction is thus
\begin{equation*}
  \f{g}=\R\JJ\oplus \Span\{\xi,J\xi\}\oplus U.
\end{equation*}
On the other hand, the description \eqref{para-complex proj} of \( \pC P^n
\) as a symmetric space has Cartan decomposition
\begin{equation*}
  \sL(n+1,\R)=\f{s}(\gl(n,\R)\oplus\gl(1,\R)) \oplus \f{m} \subset \so(n+1,n+1),
\end{equation*}
with
\begin{equation*}
  \f{m}=\Set*{
  \begin{pmatrix}
    0_n & v\\
    -v^* & 0
  \end{pmatrix}
  }{v\in\pC^{n}}.
\end{equation*}
We write \( \pC^n = \R^n + e\R^n \).  The algebra \( \sL(n+1,\R)
\) decomposes as
\begin{equation*}
  \sL(n+1,\R) = \f{s}\left(\gl(n,\R)\oplus\gl(1,\R)\right) \oplus \f{a}
  \oplus \f{n}_1 \oplus \f{n}_2,
\end{equation*}
where
\begin{equation*}
  \f{a}=\R A_0,\qquad
  A_0 =
  \begin{pmatrix}
    0_{n-1} & 0 & 0\\
    0  & 0 & e\\
    0  & e & 0
  \end{pmatrix},
\end{equation*}
is a maximal \( \R \)-diagonalisable subalgebra of \( \f{m} \), and
\begin{equation*}
  \f{n}_1=\Set*{
  \begin{pmatrix}
    0_{n-1} & -ev & v\\
    -ev^* & 0 & 0\\
    -v^*& 0 & 0
  \end{pmatrix}
  }{v\in\pC^{n-1}},
\quad
  \f{n}_2=\Set*{
  \begin{pmatrix}
    0_{n-1} & 0 & 0\\
    0 &  -eb & b\\
    0 & -b & eb
  \end{pmatrix}
  }{b\in\R},
\end{equation*}
are the eigenspaces of the positive restricted roots \(
\Sigma^+=\{\lambda,2\lambda\} \) with \( \lambda(A_0)=1 \).

We shall identify \( \f{g} \) with a subalgebra of \( \sL(n+1,\R) \)
following arguments analogous to those in~\cite{CGS1}.  First it is
obvious that \( \JJ\in \f{s}\left(\gl(n,\R)\oplus\gl(1,\R)\right) \),
and since \( \JJ \) acts trivially on \( \Span\{\xi,J\xi\} \) and
effectively on \( U \), the space \( U \) can be identified with \(
\f{n}_1 \) and \( \Span\{\xi,J\xi\}\subset \R\JJ+\f{a}+\f{n}_2 \).
Now, from \eqref{corchetes paraC} it easily follows that \(
L_0\in\f{n}_2 \), and since \( \xi \) has only real eigenvalues on \(
\f{g} \), we can take \( \xi=g(\xi,\xi)A_0 \) up to a Lie algebra
automorphism.  Let
\begin{equation*}
  X =
  \begin{pmatrix}
    0_{n-1} & 0 & 0\\
    0  & -e & 1\\
    0  & -1 & e
  \end{pmatrix}.
\end{equation*}
Using a Lie algebra automorphism we can take \( L_0=X \) which gives
\( J\xi=X+g(\xi,\xi)\JJ \).  Finally, identifying \( U \) with \(
\f{n}_1 \) and \( \f{n}_1 \) with \( \pC^{n-1} \) in the obvious way,
we have from \eqref{corchetes paraC} \( [v,w]=2g(v,Jw)X \).

From the matrix expression of \( \f{n}_1 \) we obtain \(
[v,w]=-2\inp v{ew}X \), where \( \inp vw =\re\sum_j\bar{v}_jw_j
\), \( v,w\in U\equiv\f{n}_1\equiv\pC^{n-1} \).  Comparing this
two expressions we conclude that \( J \) is acting on \( U \) as
multiplication by \( -e \), therefore \( \JJ \) must be
\begin{equation*}
  \JJ=\frac{e}{n+1}\diag((-2)^{n-1},(n-1)^2),
\end{equation*}
with powers denoting multiplicities.

Regarding the Lie algebra involutions involved in the proof of
Theorem~\ref{thm:incomplete} we take \( \sigma\colon \f{g}  \to \f{g}
\) given by
\begin{gather*}
  \JJ \mapsto -\JJ,\qquad A_0 \mapsto A_0,\qquad X+g(\xi,\xi)\JJ
  \mapsto -\left(X+g(\xi,\xi)\JJ\right),\\
  v \mapsto -\overline{v},\quad v\in\f{n}_1\equiv\pC^{n-1}
\end{gather*}
and \( \tau\colon \f{g}^{\sigma}  \to  \f{g}^{\sigma} \) with
\begin{equation*}
  A_0  \mapsto   A_0,\qquad
  (v_1,\dots,v_{n-2},v_{n-1})^T \mapsto  (-v_1,\dots,-v_{n-2},v_{n-1})^T.
\end{equation*}
We thus have
\begin{equation*}
  \f{k}=(\f{g}^{\sigma})^{\tau}=\Set*{
  \begin{pmatrix}
    0_{n-2}  & 0 & 0 & 0\\
    0 & 0 & 0 & es\\
    0 & 0 & 0 & et\\
    0 & -es & et & 0
  \end{pmatrix}
}{s,t\in\R},
\end{equation*}
and the chain of totally geodesic submanifolds
\begin{equation*}
  K=(G^{\sigma})^{\tau} \subset G^{\sigma}=(G/H)^{\sigma} \subset G/H,
\end{equation*}
where \( K \) is as in Lemma~\ref{lemma completeness K}, and is
incomplete.

\subsection{Pseudo-K\"ahler case}
\label{subsection pseudo-Kahler case}

During this subsection \( i \) denotes the imaginary complex unit.
The computations of the infinitesimal model \(
(\f{g},\f{hol}^{\nabla}) \) are completely analogous to those in the
previous subsection setting \( \e=-1 \).  We obtain that
\begin{equation*}
  \widetilde{R}_{XY}Z=2 g(X,JY)g(\xi,\xi)JZ,
\end{equation*}
so that \( \f{hol}^{\wnabla} \) is the one dimensional Lie algebra
generated by \( \JJ=\frac{1}{2g(\xi,\xi)^2}\widetilde{R}_{\xi J\xi}
\).  The remaining brackets are
\begin{equation}
  \label{corchetes pseudoC}
  \begin{aligned}
    [Z_1,Z_2]&=-2g(Z_1,JZ_2)L_0, & [\xi,J\xi]&=2g(\xi,\xi)L_0,\\
    \left[\xi,Z\right]&=g(\xi,\xi)JZ & [J\xi,Z]&=g(\xi,\xi)JZ,
  \end{aligned}
\end{equation}
where \( Z_1,Z_2,Z\in U \) and \( L_0=J\xi-g(\xi,\xi)\JJ \).
Nomizu's construction gives the Lie algebra
\begin{equation*}
  \f{g}=\R\JJ\oplus \Span\{\xi,J\xi\}\oplus U,
\end{equation*}
where \( U=\Span\{\xi,J\xi\}^{\bot} \).  On the other hand, recall
description \eqref{complex proj hyperb} of \( \CH^n_s \) as symmetric
space.  The Riemannian case \( \CH^n_0 \) is studied in~\cite{CGS1}.
We then suppose \( s>0 \), and for the sake of simplicity we also
suppose \( 2s<n-1 \), the opposite case is analogous.  Let
\begin{equation*}
  \varepsilon=\begin{pmatrix}0 & 1\\ 1& 0\end{pmatrix}
  \quad\text{and}\quad
  \Sigma=\diag\bigl((1)^{n-2s-1},(\varepsilon)^{s+1}\bigr).
\end{equation*}
We have
\begin{equation*}
  \su(n-s,s+1)=\Set{C\in\gl(n+1,\C)}{C^*\Sigma+\Sigma C=0,\
  \Tr(C)=0},
\end{equation*}
so that \( \su(n-s,s+1) \) decomposes as
\begin{equation*}
  \su(n-s,s+1) = \f{s}\left(\un(n-s,s)\oplus\un(1)\right) \oplus \f{a}
  \oplus \f{n}_1 \oplus \f{n}_2,
\end{equation*}
where \( \f{a}=RA_0 \), \( A_0=\diag(0,\dots,0,1,-1) \),
\begin{equation*}
  \f{n}_1=\Set*{
  \begin{pmatrix}
    0_{n-1} & 0 & v\\
    -\left(\Sigma'v\right)^* & 0 & 0\\
    0 & 0 & 0
  \end{pmatrix}
  }{v\in\C^{n-1}},
  \quad
  \f{n}_2=\Set*{
  \begin{pmatrix}
    0_{n-1} & 0 & 0\\
    0 & 0 & ib\\
    0 & 0 & 0
  \end{pmatrix}
  }{b\in\R},
\end{equation*}
for \( \Sigma'=\diag\bigl((1)^{n-2s-1},(\varepsilon)^s\bigr) \).  As
in the para-K\"ahler case we identify \( \f{g} \) with a subalgebra of
\( \su(n-s,s+1) \).  More precisely we have that \( U \) is identified
with \( \f{n}_1 \), \( \xi=g(\xi,\xi)A_0 \), and \( J\xi =
L_0+g(\xi,\xi)A_0 \) with
\begin{equation*}
  L_0=
  \begin{pmatrix}
    0_{n-1} & 0 & 0\\
    0 & 0 & i\\
    0 & 0 & 0
  \end{pmatrix}.
\end{equation*}
In addition, from the matrix representation of \( \f{n}_1 \) we obtain
\begin{equation*}
  \JJ=\frac{i}{n+1}\diag\bigl( (-2)^{n-1},(n-1)^2 \bigr).
\end{equation*}

Regarding the Lie algebra involutions involved in the proof of
Theorem~\ref{thm:incomplete} we take \( \sigma\colon \f{g} \to \f{g}
\) defined by
\begin{gather*}
  \JJ \mapsto -\JJ,\qquad A_0 \mapsto A_0,\qquad X+g(\xi,\xi)\JJ
  \mapsto -\bigl(X+g(\xi,\xi)\JJ\bigr),\\
  v \mapsto \overline{v},\quad v\in\f{n}_1\equiv\C^{n-1}\\
\end{gather*}
and \( \tau\colon \f{g}^{\sigma}  \to  \f{g}^{\sigma} \) with
\begin{equation*}
  A_0  \mapsto   A_0,
    \qquad (v_1,\dots,v_{n-2},v_{n-1})^T
    \mapsto (-v_1,\dots,-v_{n-1},-v_{n-2})^T.
\end{equation*}
Then
\begin{equation*}
  \f{k}=(\f{g}^{\sigma})^{\tau}=\Set*{
  \begin{pmatrix}
    0_{n-3} & 0 & 0 & 0 & 0\\
    0 & 0 & 0 & 0 & t\\
    0 & 0 & 0 & 0 & -t\\
    0 & t & -t & s & 0\\
    0 & 0 & 0 & 0 & -s
  \end{pmatrix}
  }{s,t\in\R},
\end{equation*}
and we have the following chain of totally geodesic submanifolds:
\begin{equation*}
  K=(G^{\sigma})^{\tau}\subset  G^{\sigma}=(G/H)^{\sigma} \subset
  G/H,
\end{equation*}
where \( K \) is as in Lemma \ref{lemma completeness K}.

\subsection{Para-quaternion K\"ahler case}

For this section $\pH$ denotes the set of para-quaternions with
imaginary units $i,j,k$.  Using~\eqref{struct linear type QT} we
compute
\begin{equation*}
  \begin{split}
    R^S_{XY}W &=
    -g(\xi,\xi)\Bigl\{g(X,W)Y-g(Y,W)X \eqbreak[8]
    +\sum_a\epsilon_a(g(X,J_aW)J_aY-g(Y,J_aW)J_aX)\Bigr\} \eqbreak
    -2\sum_a\epsilon_a(g(\xi,J_aW)g(X,J_aY)\xi+g(\xi,W)g(X,J_aY)J_a\xi) \eqbreak
    +2\sum_a(g(X,J_aY)g(\xi,J_cW)J_b\xi-g(X,J_aY)g(\xi,J_bW)J_c\xi),
  \end{split}
\end{equation*}
where \( (a,b,c) \) is a cyclic permutation of \( (1,2,3) \), and
\( (\epsilon_1,\epsilon_2,\epsilon_3)=(-1,1,1) \).  From \(
\widetilde{R}=R-R^S \) and \( R=-g(\xi,\xi)R^0 \), we obtain
\begin{equation*}
  \begin{split}
    \widetilde{R}_{XY}W & =
    - 2 \sum_a \epsilon_a g(\xi,\xi) g(X,J_aY) J_aW \eqbreak
    + 2 \sum_a g(X,J_aY)\bigl(\epsilon_a g(\xi,J_aW) \xi + \epsilon_a
    g(\xi,W) J_a \xi - g(\xi,J_cW) J_b\xi \eqbreak[12] + g(\xi,J_bW)J_c\xi\bigr).
  \end{split}
\end{equation*}
In particular
\begin{gather*}
  \widetilde{R}_{XY}\xi =0,\\
  \begin{aligned}
    \widetilde{R}_{XY}J_1\xi & =
    4g(\xi,\xi)\bigl(g(X,J_2Y)J_3\xi-g(X,J_3Y)J_2\xi\bigr),\\
    \widetilde{R}_{XY}J_2\xi & =
    4g(\xi,\xi)\bigl(g(X,J_1Y)J_3\xi-g(X,J_3Y)J_1\xi\bigr),\\
    \widetilde{R}_{XY}J_3\xi & =
    4g(\xi,\xi)\bigl(g(X,J_2Y)J_1\xi-g(X,J_1Y)J_2\xi\bigr),
  \end{aligned}\\
  \widetilde{R}_{XY}Z
  =-2g(\xi,\xi)\sum_a\epsilon_ag(X,J_aY)J_aZ,\quad
  \text{for \( Z\in(\widetilde{\bH}\xi)^{\bot} \).}
\end{gather*}
This shows that \( \f{hol}^{\wnabla} \) acts on
\begin{equation*}
  T_pM = \R\xi+\im\pH \xi+(\widetilde{\bH}\xi)^{\bot}
\end{equation*}
as \( \sP(1,\R) \) acts on the representation
\begin{equation*}
  \R+\sP(1,\R)+EH,
\end{equation*}
where \( E=\R^{2n-2} \) and \( H=\R^2 \).  In addition, for \(
Y\in(\widetilde{\bH}X)^{\bot} \) we have \( \widetilde{R}_{XY}=0 \),
and for \( X \) such that \( g(X,X)=1/(2g(\xi,\xi)) \) we have
\begin{gather*}
  \widetilde{R}_{XJ_aX}\xi=0,\qquad
  \widetilde{R}_{XJ_aX}J_b\xi=-[J_a,J_b]\xi\qquad \text{and}\\
  \widetilde{R}_{XJ_aX}Z=-J_aZ,\quad
  Z\in(\widetilde{\bH}\xi)^{\bot}.
\end{gather*}
Denoting by \( \JJ_a \) the element of \( \f{hol}^{\wnabla} \)
that acts as \( J_a \) on \( (\widetilde{\bH}\xi)^{\bot} \), the
remaining brackets of \( \f{g} \) are
\begin{align}
  [Z_1,Z_2] &=2\sum_a\epsilon_ag(Z_1,J_aZ_2)(J_a\xi-g(\xi,\xi)\JJ_a),
  \label{corchete inicial paracomp}\\
  [\xi,Z] & = g(\xi,\xi)Z,\\
  [J_a\xi,Z]&= g(\xi,\xi)J_aZ,\\
  [\xi,J_a\xi]&= 2g(\xi,\xi)J_a\xi-2g(\xi,\xi)^2\JJ_a,\\
  [J_a\xi,J_b\xi]&=\epsilon_c\left(4g(\xi,\xi)J_c\xi-2g(\xi,\xi)^2\JJ_c\right),
  \label{corchete final paracomp}
\end{align}
for \( (a,b,c) \) any cyclic permutation of \( (1,2,3) \), where \(
Z,Z_1,Z_2\in (\pH X)^{\bot} \).  The Nomizu construction thus gives us
the Lie algebra
\begin{equation*}
  \f{g} = T_pM+\f{hol}^{\wnabla} \cong
  \R\xi+\im\pH \xi+(\pH\xi)^{\bot}+\sP(1,\R),
\end{equation*}
where \( \f{hol}^{\wnabla} \) acts on \( T_pM \) as \( \sP(1,\R)
\) acts on the representation \( \R+\sP(1,\R)+\pH^{n-1} \).  We now
identify this algebra with a subalgebra of \( \sP(n+1,\R) \).  The
algebra \( \sP(n+1,\R)=\Set{A\in\gl(n+1,\pH)}{A+A^*=0} \) has
Cartan decomposition
\begin{equation*}
  \sP(n+1,\R)=\sP(n,\R)+\sP(1,\R)+\f{p},
\end{equation*}
where
\begin{gather*}
  \sP(n,\R)+\sP(1,\R)=\Set*{
  \begin{pmatrix}
    A & 0 \\
    0 & q
  \end{pmatrix}
  }{A\in\sP(n,\R),q\in\im\pH},
  \\
  \f{p}=\Set*{
  \begin{pmatrix}
    0 & v\\
    -v^* & 0
  \end{pmatrix}
  }{v\in\pH^n}.
\end{gather*}
The maximal abelian subalgebra of \( \f{p} \) is up to isomorphism
\( \f{a}=\Span\{A_0\} \), where
\begin{equation*}
  A_0=
  \begin{pmatrix}
    0_{n-1} & 0 & 0\\
    0 & 0 & j\\
    0 & j & 0
  \end{pmatrix}.
\end{equation*}
The restricted roots are \( \{\pm\lambda,\pm 2\lambda\} \), where
\( \lambda(A_0)=1 \).  With the choice of positive roots
\( \{\lambda,2\lambda\} \), the corresponding root spaces are
\begin{gather*}
  \f{n}_1=\Set*{
  \begin{pmatrix}
    0_{n-1} & -vj & v\\
    -j\bar{v} & 0 & 0\\
    -\bar{v} & 0 & 0
  \end{pmatrix}
  }{v\in\pH^{n-1}}
  ,\\
  \f{n}_2=\Set*{
  \begin{pmatrix}
    0_{n-1} & 0 & 0\\
    0 & -jqj & jq\\
    0 & \bar{q}j & q
  \end{pmatrix}
  }{q\in\im\pH}.
\end{gather*}
Therefore, the algebra \( \sP(n+1,\R) \) decomposes as
\begin{equation*}
\sP(n+1,\R)=\sP(n,\R)+\sP(1,\R)+\f{a}+\f{n}_1+\f{n}_2.
\end{equation*}
By a similar argument to~\cite[\S 5.2]{CGS}, one can identify \(
\f{hol}^{\wnabla} \) with the second summand in \( \sP(n,\R)+sp(1,\R)
\).  We consider the \( \ad \)-invariant complement \(
\f{m}_{\lambda}=\f{a}+\f{n}_1+\f{p}_{\lambda} \) where
\begin{equation*}
  \f{p}_{\lambda}=\Set*{
  \begin{pmatrix}
    0_{n-1} & 0 & 0\\
    0 & (\lambda-1)jqj & jq\\
    0 & \bar{q}j & (\lambda+1)q
  \end{pmatrix}
  }{q\in\im\pH}
\end{equation*}
and \( \lambda \in\R \).  From the brackets \eqref{corchete inicial
paracomp}--\eqref{corchete final paracomp} we see that \(
\xi\in\f{a} \) and \( J_a\xi\in\f{p}_{\lambda} \), and  by the
holonomy action we identify \( \f{n}_1 \) with \( (\pH\xi)^{\bot}
\).  In addition, comparing the brackets
\begin{equation*}
[Z_1,Z_2] =-2\sum_a\epsilon_ag(J_aZ_1,Z_2)(J_a\xi-g(\xi,\xi)\JJ_a),
\end{equation*}
and
\begin{equation*}
  \begin{split}
    [v,w]&= 2 \inp{v(-i)}w
    \begin{pmatrix}
      0_{n-1} & 0 & 0\\
      0 & i & -k\\
      0 & -k & i
    \end{pmatrix}
    - 2\inp{v(-j)}w
    \begin{pmatrix}
      0_{n-1} & 0 & 0\\
      0 & -j & 1\\
      0 & -1 & j
    \end{pmatrix}
    \eqbreak
    - 2\inp{v(-k)}w
    \begin{pmatrix}
      0_{n-1} & 0 & 0\\
      0 & k & -i\\
      0 & -i & k
    \end{pmatrix}
    ,
  \end{split}
\end{equation*}
where \( v,w\in\f{n}_1\cong (\pH\xi)^{\bot}\cong\pH^{n-1} \) (as
para-quaternion vector spaces) and \( \inp vw=\re(v^*w) \), we
have
\begin{gather*}
  J_1\xi - g(\xi,\xi)\JJ_1 =
  \begin{pmatrix}
    0_{n-1} & 0 & 0\\
    0 & i & -k\\
    0 & -k & i
  \end{pmatrix}
  ,\quad
  J_2\xi - g(\xi,\xi)\JJ_2=
  \begin{pmatrix}
    0_{n-1} & 0 & 0\\
    0 & -j & 1\\
    0 & -1 & j
  \end{pmatrix},
\\
J_3\xi - g(\xi,\xi)\JJ_3 =
\begin{pmatrix}
  0_{n-1} & 0 & 0\\
  0 & k & -i\\
  0 & -i & k
\end{pmatrix}.
\end{gather*}
Hence \( \JJ_1 \) acts on \( \f{n}_1 \) as right multiplication by
\( -i \), etc., that is
\begin{equation*}
  \JJ_1 =
  \begin{pmatrix}
    0_{n-1} & 0 & 0\\
    0 & -i & 0\\
    0 & 0 & i
  \end{pmatrix}
  \quad  \JJ_2=
  \begin{pmatrix}
    0_{n-1} & 0 & 0\\
    0 & j & 0\\
    0 & 0 & j
  \end{pmatrix},
  \quad
  \JJ_3 =
  \begin{pmatrix}
    0 & 0 & 0\\
    0 & -k & 0\\
    0 & 0 & k
  \end{pmatrix}.
\end{equation*}

Regarding the Lie algebra involutions involved in the proof of
Theorem~\ref{thm:incomplete} we take \( \sigma\colon \f{g} \to \f{g}
\) given by
\begin{gather*}
  \JJ_1 \mapsto -\JJ_1,\quad \JJ_2 \mapsto \JJ_2,\quad
  \JJ_3  \mapsto  -\JJ_3,\\
  \xi \mapsto \xi,\quad J_1\xi \mapsto -J_1\xi,\quad J_2\xi \mapsto
  J_2\xi,\quad
  J_3\xi  \mapsto  -J_3\xi,\\
  v_1+iv_2+jv_3+kv_4 \mapsto v_1-iv_2+jv_3-kv_4,
\end{gather*}
for \( v_1+iv_2+jv_3+kv_4\in(\pH\xi)^{\bot} \).  We then let \(
\tau\colon \f{g}^{\sigma}  \to  \f{g}^{\sigma} \) be
\begin{equation*}
  \JJ_2  \mapsto  -\JJ_2,\quad
  \xi  \mapsto  \xi,\quad
  J_2\xi  \mapsto  -J_2\xi,\quad
  v_1+iv_2  \mapsto
  -v_1+jv_2,
\end{equation*}
and additionally define \(   \lambda\colon (\f{g}^{\sigma})^{\tau}
\to  (\f{g}^{\sigma})^{\tau} \) by
\begin{equation*}
  \xi  \mapsto  \xi,\qquad
  (v_1j,\dots,v_{n-2}j,v_{n-1})^T
  \mapsto  (-v_1j,\dots,-v_{n-2}j,+v_{n-1}j)^T,
\end{equation*}
The fixed point set of the sequence \( \sigma \), \( \tau \), \( \lambda \) is
\begin{equation*}
  \f{k}=\Span\{\xi,(0,\dots,0,j)\},
\end{equation*}
so that the chain of totally geodesic submanifolds is
\begin{equation*}
  K\subset (G^{\sigma})^{\tau}=(G^{\sigma}/H^{\sigma})^{\tau}\subset
  G^{\sigma}/H^{\sigma}=(G/H)^{\sigma} \subset G/H.
\end{equation*}
Once again it is easy to see that \( K \) is as in Lemma \ref{lemma
completeness K}.

\subsection{Pseudo-quaternion K\"ahler case}

Throughout this section $i,j,k$ are the imaginary units of the
quaternions~\( \bH \). With the help of formula \eqref{struct linear
type QT} we compute
\begin{align*}
  R^S_{XY}W & =-g(\xi,\xi)\Bigl\{g(X,W)Y-g(Y,W)X\eqbreak[6]
  +\sum_a(-g(X,J_aW)J_aY+g(Y,J_aW)J_aX)\Bigr\}\\
 &
 -2\sum_a\left\{g(\xi,J_aW)g(X,J_aY)\xi+g(\xi,W)g(X,J_aY)J_a\xi\right\}\\
 &+2\sum_a\left\{g(X,J_aY)g(\xi,J_cW)J_b\xi-g(X,J_aY)g(\xi,J_bW)J_c\xi\right\},
\end{align*}
where \( (a,b,c) \) is a cyclic permutation of \( (1,2,3) \).  Then
\( \widetilde{R}=R-R^S \) gives
\begin{align*}
\widetilde{R}_{XY}W & =-2g(\xi,\xi)\sum_ag(J_aX,Y)J_aW\\
& +2\sum_a\left\{g(\xi,J_aW)g(X,J_aY)\xi+g(\xi,W)g(X,J_aY)J_a\xi\right\}\\
 &-2\sum_a\left\{g(X,J_aY)g(\xi,J_cW)J_b\xi-g(X,J_aY)g(\xi,J_bW)J_c\xi\right\}.
\end{align*}
In particular
\begin{gather*}
  \widetilde{R}_{XY}\xi =0,\\
  \begin{aligned}
    \widetilde{R}_{XY}J_1\xi
    &=-4g(\xi,\xi)\left\{g(J_3X,Y)J_2\xi-g(J_2X,Y)J_3\xi\right\},\\
    \widetilde{R}_{XY}J_2\xi
    &=-4g(\xi,\xi) \left\{g(J_1X,Y)J_3\xi-g(J_3X,Y)J_1\xi\right\},\\
    \widetilde{R}_{XY}J_3\xi &
    =-4g(\xi,\xi)\left\{g(J_2X,Y)J_1\xi-g(J_1X,Y)J_2\xi\right\},
  \end{aligned}
  \\
  \widetilde{R}_{XY}Z=-2g(\xi,\xi)\sum_ag(J_aX,Y)J_aZ,\qquad \text{for
  \( Z\in(\bH\xi)^{\bot} \)}.
\end{gather*}
This implies that \( \f{hol}^{\wnabla} \) acts over \( T_pM \) as
\( \sP(1) \) in the representation
\begin{equation*}
  T_pM=\R\xi+\im\bH\xi+(\bH\xi)^{\bot} =\R+\sP(1)+[EH],
\end{equation*}
where here \( E=\C^{n-1} \).  In addition, for \( Y\in(\bH
\xi)^{\bot} \) we have \( \widetilde{R}_{XY}=0 \), and for \( X \)
such that \( g(X,X)=1/(2g(\xi,\xi)) \) we have
\begin{equation*}
  \widetilde{R}_{XJ_aX}\xi  =0,\quad
  \widetilde{R}_{XJ_aX}J_bX =-[J_a,J_b]\xi,\quad
  \widetilde{R}_{XJ_aX}Z = -J_aZ.
\end{equation*}
We denote by \( \JJ_a \) the element \( -\widetilde{R}_{XJ_aX} \),
which acts as \( J_a \) on the factor \( [EH]\cong
(\bH\xi)^{\bot} \).  The remaining brackets of \( \f{g} \) are
given by
\begin{align} [Z_1,Z_2]
  &=2\sum_a\left\{g(J_aZ_1,Z_2)J_a\xi-g(\xi,\xi)g(J_aZ_1,Z_2)\JJ_a\right\}
  \label{corchete inicial},\\
  [\xi,Z] &= g(\xi,\xi)Z,\\
  [\xi,J_a\xi] &=2g(\xi,\xi)J_a\xi-2g(\xi,\xi)^2\JJ_a,\\
  [J_a\xi,Z] &=g(\xi,\xi)J_aZ,\\
  [J_a\xi,J_b\xi]
  &=4g(\xi,\xi)J_c\xi-2g(\xi,\xi)^2\JJ_c,\label{corchete final}
\end{align}
for \( Z,Z_1,Z_2\in(\bH\xi)^{\bot} \) and each cyclic permutation
\( (a,b,c) \) of \( (1,2,3) \).  The Lie algebra
produced by Nomizu's construction is thus
\begin{equation*}
  \f{g}=T_pM+\f{hol}^{\wnabla}=\R\xi+\im\bH\xi+(\bH\xi)^{\bot}+\sP(1),
\end{equation*}
where \( \f{hol}^{\wnabla} \) acts on \( T_pM \) as \( \sP(1) \)
on \( \R+\sP(1)+\bH^{n-1} \).  Recalling description \eqref{symmet
pseudo-quaternion proj hyperb} of \( \HH^{n}_s \) as a symmetric
space, we identify \( \f{g} \) with a subalgebra of \(
\sP(n-s,s+1) \).  The Riemannian case \( \HH^n_0 \) is studied in
\cite{CGS}, for that reason we suppose \( s>0 \).  We can also
suppose \( n-2s-1>0 \) for the sake of simplicity.  Let
\begin{equation*}
  \varepsilon=
  \begin{pmatrix}
    0 & 1\\
    1 & 0
  \end{pmatrix}
  ,\quad
  \Sigma=\diag\bigl((1)^{n-2s-1},(\varepsilon)^{s+1}\bigr),
\end{equation*}
we have that
\begin{equation*}
  \sP(n-s,s+1)=\Set{A\in\gl(n+1,\bH)}{A^*\Sigma+\Sigma A=0}.
\end{equation*}
The algebra \( \f{sp(n-s,s+1)} \) decomposes as
\begin{equation*}
  \sP(n-s,s+1)=\sP(n-s,s)+\sP(1)+\f{a}+\f{n}_1+\f{n}_2,
\end{equation*}
where \( \f{a} \) is generated by \( A_0=\diag(0,\dots,0,1,-1) \) and
\begin{equation*}
  \f{n}_1=\Set*{
  \begin{pmatrix}
    0_{n-1} & 0 & v\\
    -(\Sigma'v)^* & 0 & 0\\
    0 & 0 & 0
  \end{pmatrix}
  }{v\in\bH^{n-1}},\quad
  \f{n}_2=\Set*{
  \begin{pmatrix}
    0_{n-1} & 0 & 0\\
    0 & 0 & b\\
    0 & 0 & 0
  \end{pmatrix}
  }{b\in\im\bH},
\end{equation*}
with \( \Sigma'=\diag\bigl((1)^{n-2s-1},(\varepsilon)^s) \).
Following the same arguments as in the para-quaternion K\"ahler
case, \( \f{hol}^{\wnabla} \) is identified with the second
summand in the Lie subalgebra \( \sP(n-s,s)+\sP(1) \).  We also
identify \( \f{n}_1 \) with \( (\bH\xi)^{\bot} \), and from the matrix
expression of \( \f{n}_1 \) we obtain
\begin{equation*}
  J_1\xi-g(\xi,\xi)\JJ_1=
  \begin{pmatrix}
    0_{n-1}&0&0\\
    0 & 0 & i\\
    0 & 0 & 0
  \end{pmatrix},\quad\text{etc.}
\end{equation*}
In addition we have
\begin{equation*}
  \JJ_1=
  \begin{pmatrix}
    0&0&0\\
    0 & i & 0\\
    0 & 0 & i
  \end{pmatrix}
  ,\quad\JJ_2=
  \begin{pmatrix}
    0&0&0\\
    0 & j & 0\\
    0 & 0 & j
  \end{pmatrix}
  ,\quad\JJ_3=
  \begin{pmatrix}
    0&0&0\\
    0 & k & 0\\
    0 & 0 & k
  \end{pmatrix}.
\end{equation*}

For the Lie algebra involutions involved in the proof of
Theorem~\ref{thm:incomplete} we finally take \(   \sigma\colon \f{g}
\to  \f{g} \) given by
\begin{gather*}
        \JJ_1  \mapsto  \JJ_1,\quad
        \JJ_2  \mapsto  -\JJ_2,\quad
        \JJ_3  \mapsto  -\JJ_3\\
        \xi  \mapsto  \xi,\quad
        J_1\xi  \mapsto  J_1\xi,\quad
        J_2\xi  \mapsto  -J_2\xi,\quad
        J_3\xi  \mapsto  -J_3\xi\\
        v_1+iv_2+jv_3+kv_4  \mapsto
       v_1+iv_2-jv_3-kv_4,
\end{gather*}
for \( v_1+iv_2+jv_3+kv_4\in(\bH\xi)^{\bot} \).  Then we put \(
\tau\colon \f{g}^{\sigma} \to \f{g}^{\sigma} \) to be
\begin{equation*}
        \JJ_1  \mapsto  -\JJ_1,\quad
        \xi  \mapsto  \xi,\quad
        J_1\xi  \mapsto  -J_1\xi,\quad
        v_1+iv_2  \mapsto
       v_1-iv_2,
\end{equation*}
and define \( \lambda\colon (\f{g}^{\sigma})^{\tau} \to
(\f{g}^{\sigma})^{\tau} \) by
\begin{equation*}
  \xi  \mapsto  \xi,\qquad
  (v_1,\dots,v_{n-2},v_{n-1})^T
  \mapsto  (-v_1,\dots,-v_{n-1},-v_{n-2})^T.
\end{equation*}
This leads to the chain of totally geodesic submanifolds
\begin{equation*}
  K=((G^{\sigma})^{\tau})^{\lambda}\subset
  (G^{\sigma})^{\tau}=(G^{\sigma}/H^{\sigma})^{\tau}\subset
  G^{\sigma}/H^{\sigma}=(G/H)^{\sigma} \subset G/H,
\end{equation*}
with \( K \) as in Lemma \ref{lemma completeness K}, and so incomplete.

\bigskip
{\small
  \setlength{\parindent}{0pt}
  Ignacio Luj\'an

  Departamento de Geometr\'\i a y Topolog\'\i a, Facultad de Matem\'aticas,
  Universidad Complutense de Madrid, Av.\ Complutense s/n,
  28040--Madrid, Spain.

  \textit{E-mail}: \url{ilujan@mat.ucm.es}

  \smallskip Andrew Swann

  Department of Mathematics, Aarhus University, Ny Munkegade 118, Bldg
  1530, DK-8000 Aarhus C, Denmark \textit{and}
  CP\textsuperscript3-Origins, Centre of Excellence for Cosmology and
  Particle Physics Phenomenology, University of Southern Denmark,
  Campusvej 55, DK-5230 Odense M, Denmark.

  \textit{E-mail}: \url{swann@imf.au.dk}
  \par}

%\listoftodos


\begin{thebibliography}{99}

\bibitem{AC} D.V. Alekseevsky, V. Cort\'es, \emph{Classification of
  pseudo-Riemannian symmetric spaces of quaternionic K\"ahler type},
  Lie Groups and Invariant Theory, Amer. Math. Soc. Transl. (2) 213,
  Amer. Math. Soc., Providence, RI., 2005, 33--62.

\bibitem{AS} W. Ambrose, I.M. Singer, \emph{On homogeneous Riemannian
  manifolds}, Duke Math. J. \textbf{25} (1958), 647--669.

\bibitem{BR} M. Barros, A. Romero, \emph{Indefinite K\"ahler
  Manifolds}, Math. Ann. \textbf{261} (1982), 55--62.

\bibitem{BGO} W. Batat, P.M. Gadea, J.A. Oubi\~na, \emph{Homogeneous
  pseudo-Riemannian structures of linear type},
  J. Geom. Phys. \textbf{61} (2011), 745--764.

\bibitem{CFG} V. Cruceanu, P. Fortuny, P.M. Gadea, \emph{A survey on
  paracomplex geometry}, Rocky Mountain Journal of Mathematics
  \textbf{26}:1 (1996), 83--115.

\bibitem{CGS1} M. Castrill\'on L\'opez, P.M. Gadea, A.F. Swann,
  \emph{Homogeneous structures on real and complex hyperbolic spaces},
  Illinois J. Math. \textbf{53}:2 (2009), 561--574.

\bibitem{CGS} M. Castrill\'on L\'opez, P.M. Gadea, A.F. Swann,
  \emph{Homogeneous quaternionic K\"ahler structures and quaternionic
  hyperbolic space}, Transform. Groups \textbf{11}:4 (2006), 575--608.

\bibitem{CL} M. Castrill\'on L\'opez, I. Luj\'an, \emph{Strongly
  degenerate homogeneous pseudo-K\"ahler structures of linear type and
  complex plane waves}, J. Geom. Phys. \textbf{73} (2013), 1--19.

\bibitem{CL2} M. Castrill\'on L\'opez, I. Luj\'an, \emph{Homogeneous
  structures of linear type on $\e$-K\"ahler and $\e$-quaternion
  K\"ahler manifolds}, \url{arXiv:1310.4323}.

\bibitem{Fin} A. Fino, {\it Intrinsic torsion and weak holonomy},
  Math.\ J. Toyama Univ.\ \textbf{21} (1998), 1--22.

\bibitem{GM} P.M. Gadea, A. Montesinos Amilibia, \emph{Spaces of
  constant para-holomorphic sectional curvature},
  Pac. J. Math. \textbf{136}:1 (1989), 85--101.

\bibitem{GMM} P.M. Gadea, A. Montesinos Amilibia, J.  Mu\~{n}oz
  Masqu\'e, \emph{Characterizing the complex hyperbolic space by
  K\"ahler homogeneous structures}, Math. Proc. Camb. Phil.
  Soc. \textbf{128}:1 (2000), 87--94.

\bibitem{GO1} P.M. Gadea, J.A. Oubi\~na, \emph{Homogeneous
  pseudo-Riemannian structures and homogeneous almost para- Hermitian
  structures}, Houston J.  Math. \textbf{18} (1992), 449--465.

\bibitem{GO2} P.M. Gadea, J.A. Oubi\~na, \emph{Reductive Homogeneous
  Pseudo-Riemannian manifolds}, Mh. Math. \textbf{124} (1997), 17--34.

\bibitem{KO} I. Kath, M. Olbrich, M., \emph{On the structure of
  pseudo-Riemannian symmetric spaces}, Transform. Groups \textbf{14}:4
  (2009), 847--885.

\bibitem{Kir} V.F. Kiri\v{c}enko, \emph{On homogeneous Riemannian
  spaces with an invariant structure tensor},
  Sov. Math. Dokl. \textbf{21}, 734--737 (1980).

\bibitem{Mon} A. Montesinos Amilibia, \emph{Degenerate homogeneous
  structures of type \( \mathcal{S}_1 \) on pseudo-Riemannian
  manifolds}, Rocky Mountain J. Math. \textbf{31}:2 (2001), 561--579.

\bibitem{PS} J.D. P\'{e}rez, F.G. Santos, \emph{Indefinite quaternion
  space forms}, Ann. Mat. Pur. Appl. \textbf{132}:1 (1982), 383--398.

\bibitem{Tri} F. Tricerri, \emph{Locally homogeneous Riemannian
  manifolds}, Rend. Sem. Mat. Univ. Politec. Torino \textbf{50}:4
  (1992), 411--426.

\bibitem{TV} F. Tricerri, L. Vanhecke, \emph{Homogeneous Structures on
  Riemannian Manifolds}, Cambridge University Press, Cambridge,
  (1983).

\bibitem{V} S. Vukmirovic, \emph{Para-quaternionic reduction},
  preprint, 2003, \url{arXiv:math/0304424}.

\end{thebibliography}
\end{document}